\begin{document}

\allowdisplaybreaks


\title[Degeneration of Dynamical Degrees]
{Degeneration of Dynamical Degrees in Families of Maps}
\date{\today}
\author[Joseph H. Silverman]{Joseph H. Silverman}
\email{jhs@math.brown.edu}
\address{Mathematics Department, Box 1917
         Brown University, Providence, RI 02912 USA}
\author[Gregory S. Call]{Gregory S. Call}
\email{gscall@amherst.edu}
\address{Department of Mathematics and Statistics, Amherst College,
  Am\-herst, MA 01002 USA}
\subjclass[2010]{Primary: 37P05; Secondary:  37P30, 37P55}
\keywords{dynamical degree, unlikely intersection}
\thanks{Silverman's research supported by Simons Collaboration Grant
  \#241309\\
  \textbf{Authors' Note}: This is an expanded version of the article
  publishd in \emph{Acta Arithmetica}. It contains a corrected
  statement and full proof of Propostion~\ref{proposition:invim}(c).}



\hyphenation{ca-non-i-cal semi-abel-ian}


\newtheorem{theorem}{Theorem}
\newtheorem{lemma}[theorem]{Lemma}
\newtheorem{conjecture}[theorem]{Conjecture}
\newtheorem{question}[theorem]{Question}
\newtheorem{proposition}[theorem]{Proposition}
\newtheorem{corollary}[theorem]{Corollary}
\newtheorem*{claim}{Claim}

\theoremstyle{definition}
\newtheorem*{definition}{Definition}
\newtheorem{example}[theorem]{Example}
\newtheorem{remark}[theorem]{Remark}
\newtheorem{numberedclaim}[theorem]{Claim}

\theoremstyle{remark}
\newtheorem*{acknowledgement}{Acknowledgements}


\newenvironment{notation}[0]{%
  \begin{list}%
    {}%
    {\setlength{\itemindent}{0pt}
     \setlength{\labelwidth}{4\parindent}
     \setlength{\labelsep}{\parindent}
     \setlength{\leftmargin}{5\parindent}
     \setlength{\itemsep}{0pt}
     }%
   }%
  {\end{list}}

\newenvironment{parts}[0]{%
  \begin{list}{}%
    {\setlength{\itemindent}{0pt}
     \setlength{\labelwidth}{1.5\parindent}
     \setlength{\labelsep}{.5\parindent}
     \setlength{\leftmargin}{2\parindent}
     \setlength{\itemsep}{0pt}
     }%
   }%
  {\end{list}}
\newcommand{\Part}[1]{\item[\upshape#1]}

\def\Case#1#2{%
\paragraph{\textbf{\boldmath Case #1: #2.}}\hfil\break\ignorespaces}

\renewcommand{\a}{\alpha}
\renewcommand{\b}{\beta}
\newcommand{\g}{\gamma}
\renewcommand{\d}{\delta}
\newcommand{\e}{\epsilon}
\newcommand{\f}{\varphi}
\newcommand{\bfphi}{{\boldsymbol{\f}}}
\renewcommand{\l}{\lambda}
\renewcommand{\k}{\kappa}
\newcommand{\lhat}{\hat\lambda}
\newcommand{\m}{\mu}
\newcommand{\bfmu}{{\boldsymbol{\mu}}}
\renewcommand{\o}{\omega}
\renewcommand{\r}{\rho}
\newcommand{\rbar}{{\bar\rho}}
\newcommand{\s}{\sigma}
\newcommand{\sbar}{{\bar\sigma}}
\renewcommand{\t}{\tau}
\newcommand{\z}{\zeta}

\newcommand{\D}{\Delta}
\newcommand{\G}{\Gamma}
\newcommand{\F}{\Phi}
\renewcommand{\L}{\Lambda}

\newcommand{\ga}{{\mathfrak{a}}}
\newcommand{\gb}{{\mathfrak{b}}}
\newcommand{\gn}{{\mathfrak{n}}}
\newcommand{\gp}{{\mathfrak{p}}}
\newcommand{\gP}{{\mathfrak{P}}}
\newcommand{\gq}{{\mathfrak{q}}}

\newcommand{\Abar}{{\bar A}}
\newcommand{\Ebar}{{\bar E}}
\newcommand{\kbar}{{\bar k}}
\newcommand{\Kbar}{{\bar K}}
\newcommand{\Pbar}{{\bar P}}
\newcommand{\Sbar}{{\bar S}}
\newcommand{\Tbar}{{\bar T}}
\newcommand{\gbar}{{\bar\gamma}}
\newcommand{\lbar}{{\bar\lambda}}
\newcommand{\ybar}{{\bar y}}
\newcommand{\phibar}{{\bar\f}}

\newcommand{\Acal}{{\mathcal A}}
\newcommand{\Bcal}{{\mathcal B}}
\newcommand{\Ccal}{{\mathcal C}}
\newcommand{\Dcal}{{\mathcal D}}
\newcommand{\Ecal}{{\mathcal E}}
\newcommand{\Fcal}{{\mathcal F}}
\newcommand{\Gcal}{{\mathcal G}}
\newcommand{\Hcal}{{\mathcal H}}
\newcommand{\Ical}{{\mathcal I}}
\newcommand{\Jcal}{{\mathcal J}}
\newcommand{\Kcal}{{\mathcal K}}
\newcommand{\Lcal}{{\mathcal L}}
\newcommand{\Mcal}{{\mathcal M}}
\newcommand{\Ncal}{{\mathcal N}}
\newcommand{\Ocal}{{\mathcal O}}
\newcommand{\Pcal}{{\mathcal P}}
\newcommand{\Qcal}{{\mathcal Q}}
\newcommand{\Rcal}{{\mathcal R}}
\newcommand{\Scal}{{\mathcal S}}
\newcommand{\Tcal}{{\mathcal T}}
\newcommand{\Ucal}{{\mathcal U}}
\newcommand{\Vcal}{{\mathcal V}}
\newcommand{\Wcal}{{\mathcal W}}
\newcommand{\Xcal}{{\mathcal X}}
\newcommand{\Ycal}{{\mathcal Y}}
\newcommand{\Zcal}{{\mathcal Z}}

\renewcommand{\AA}{\mathbb{A}}
\newcommand{\BB}{\mathbb{B}}
\newcommand{\CC}{\mathbb{C}}
\newcommand{\FF}{\mathbb{F}}
\newcommand{\GG}{\mathbb{G}}
\newcommand{\NN}{\mathbb{N}}
\newcommand{\PP}{\mathbb{P}}
\newcommand{\QQ}{\mathbb{Q}}
\newcommand{\RR}{\mathbb{R}}
\newcommand{\ZZ}{\mathbb{Z}}

\newcommand{\bfa}{{\mathbf a}}
\newcommand{\bfb}{{\mathbf b}}
\newcommand{\bfc}{{\mathbf c}}
\newcommand{\bfd}{{\mathbf d}}
\newcommand{\bfe}{{\mathbf e}}
\newcommand{\bff}{{\mathbf f}}
\newcommand{\bfg}{{\mathbf g}}
\newcommand{\bfp}{{\mathbf p}}
\newcommand{\bfr}{{\mathbf r}}
\newcommand{\bfs}{{\mathbf s}}
\newcommand{\bft}{{\mathbf t}}
\newcommand{\bfu}{{\mathbf u}}
\newcommand{\bfv}{{\mathbf v}}
\newcommand{\bfw}{{\mathbf w}}
\newcommand{\bfx}{{\mathbf x}}
\newcommand{\bfy}{{\mathbf y}}
\newcommand{\bfz}{{\mathbf z}}
\newcommand{\bfA}{{\mathbf A}}
\newcommand{\bfF}{{\mathbf F}}
\newcommand{\bfB}{{\mathbf B}}
\newcommand{\bfD}{{\mathbf D}}
\newcommand{\bfG}{{\mathbf G}}
\newcommand{\bfI}{{\mathbf I}}
\newcommand{\bfM}{{\mathbf M}}
\newcommand{\bfzero}{{\boldsymbol{0}}}

\newcommand{\Aut}{\operatorname{Aut}}
\newcommand{\Birat}{\operatorname{Birat}}
\newcommand{\codim}{\operatorname{codim}}
\newcommand{\Crit}{\operatorname{Crit}}
\newcommand{\diag}{\operatorname{diag}}
\newcommand{\Disc}{\operatorname{Disc}}
\newcommand{\Div}{\operatorname{Div}}
\newcommand{\Dom}{\operatorname{Dom}}
\newcommand{\End}{\operatorname{End}}
\newcommand{\Fbar}{{\bar{F}}}
\newcommand{\Fix}{\operatorname{Fix}}
\newcommand{\Gal}{\operatorname{Gal}}
\newcommand{\GL}{\operatorname{GL}}
\newcommand{\Hom}{\operatorname{Hom}}
\newcommand{\Index}{\operatorname{Index}}
\newcommand{\Image}{\operatorname{Image}}
\newcommand{\hhat}{{\hat h}}
\newcommand{\Ker}{{\operatorname{ker}}}
\newcommand{\Lift}{\operatorname{Lift}}
\newcommand{\limstar}{\lim\nolimits^*}
\newcommand{\limstarn}{\lim_{\hidewidth n\to\infty\hidewidth}{\!}^*{\,}}
\newcommand{\Mat}{\operatorname{Mat}}
\newcommand{\maxplus}{\operatornamewithlimits{\textup{max}^{\scriptscriptstyle+}}}
\newcommand{\MOD}[1]{~(\textup{mod}~#1)}
\newcommand{\Mor}{\operatorname{Mor}}
\newcommand{\Moduli}{\mathcal{M}}
\newcommand{\Norm}{{\operatorname{\mathsf{N}}}}
\newcommand{\notdivide}{\nmid}
\newcommand{\normalsubgroup}{\triangleleft}
\newcommand{\NS}{\operatorname{NS}}
\newcommand{\onto}{\twoheadrightarrow}
\newcommand{\ord}{\operatorname{ord}}
\newcommand{\Orbit}{\mathcal{O}}
\newcommand{\Per}{\operatorname{Per}}
\newcommand{\Perp}{\operatorname{Perp}}
\newcommand{\PrePer}{\operatorname{PrePer}}
\newcommand{\PGL}{\operatorname{PGL}}
\newcommand{\Pic}{\operatorname{Pic}}
\newcommand{\Prob}{\operatorname{Prob}}
\newcommand{\Qbar}{{\bar{\QQ}}}
\newcommand{\rank}{\operatorname{rank}}
\newcommand{\Rat}{\operatorname{Rat}}
\newcommand{\Resultant}{\operatorname{Res}}
\renewcommand{\setminus}{\smallsetminus}
\newcommand{\sgn}{\operatorname{sgn}} 
\newcommand{\SL}{\operatorname{SL}}
\newcommand{\Span}{\operatorname{Span}}
\newcommand{\Spec}{\operatorname{Spec}}
\newcommand{\Support}{\operatorname{Supp}}
\newcommand{\tors}{{\textup{tors}}}
\newcommand{\Trace}{\operatorname{Trace}}
\newcommand{\trianglebin}{\mathbin{\triangle}} 
\newcommand{\tr}{{\textup{tr}}} 
\newcommand{\UHP}{{\mathfrak{h}}}    
\newcommand{\<}{\langle}
\renewcommand{\>}{\rangle}

\newcommand{\ds}{\displaystyle}
\newcommand{\longhookrightarrow}{\lhook\joinrel\longrightarrow}
\newcommand{\longonto}{\relbar\joinrel\twoheadrightarrow}


\begin{abstract}
The \emph{dynamical degree} of a dominant rational map
$f:\mathbb{P}^N\dashrightarrow\mathbb{P}^N$ is the quantity
$\delta(f):=\lim(\deg f^n)^{1/n}$. We study the variation of dynamical
degrees in 1-parameter families of maps~$f_T$. We make a conjecture
and ask two questions concerning, respectively, the set of~$t$ such
that: (1) $\delta(f_t)\le\delta(f_T)-\epsilon$; (2)
$\delta(f_t)<\delta(f_T)$; (3) $\delta(f_t)<\delta(f_T)$ and
$\delta(g_t)<\delta(g_T)$ for ``independent'' families of maps. We
give a sufficient condition for our conjecture to hold and prove that
the condition is true for monomial maps. We describe non-trivial
families of maps for which our questions have affirmative and negative
answers.
\end{abstract}


\maketitle

\begin{center}
\large\emph{In honor of Robert Tijdeman's 75th year}
\end{center}


\section{Introduction}

Let $f:\PP^N\dashrightarrow\PP^N$ be a dominant rational map.
A fundamental invariant attached to~$f$ is its (\emph{first})
\emph{dynamical degree}, which is the quantity
\[
  \d(f) = \lim_{n\to\infty} \Bigl(\deg(f^n)\Bigr)^{1/n}.
\]
We note that the convergence of the limit is an easy convexity
argument using the fact that~$\deg(f^{n+m})\le\deg(f^n)\deg(f^m)$,
see for example~\cite[Proposition~9.6.4]{MR1326374}, and
we recall that~$f$ is said to be \emph{algebraically stable} if
$\d(f)=\deg(f)$, which in turn is equivalent to
$\deg(f^n)=\d(f)^n=(\deg f)^n$ for all $n\ge1$.

In this paper we study the variation of dynamical degrees as~$f$
moves in a family. We consider a smooth irreducible quasi-projective
curve~$T/\CC$ and a family
\[
  f_T:\PP^N_T\dashrightarrow\PP^N_T
\]
of dominant rational maps, i.e., for every~$t\in T(\CC)$, the
specialization~$f_t$ is a dominant rational map.  We start with a
conjecture and two questions, followed by some brief remarks.  Our
main results include a proof of the conjecture for monomial maps and
the analysis of several non-trivial families of maps which display
some of the subtleties inherent in our two questions.

\begin{conjecture}
\label{conjecture:1}
For all $\e>0$, the set
\[
  \bigl\{ t\in T(\CC) : \d(f_t) \le \d(f_T)-\e \bigr\}
\]
is finite.
\end{conjecture}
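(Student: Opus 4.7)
The overall plan is to combine the pointwise lower-semicontinuity of each iterated degree with an algebraic-stability argument applied to a sufficiently high iterate. Fix $n\ge 1$ and write a homogeneous lift of $f_T^n$ as $[F_0,\dots,F_N]$ with $F_i\in\CC[T][x_0,\dots,x_N]$, normalized so that $\gcd(F_0,\dots,F_N)=1$ on the generic fiber. Specialization $T\mapsto t$ preserves $\deg_x F_i$ but can only introduce additional common factors, never remove them, so $\deg(f_t^n)\le\deg(f_T^n)$ for every $t$, with equality outside a finite set $S_n\subset T(\CC)$. Taking $n$th roots and then $\inf_n$ already yields $\d(f_t)\le\d(f_T)$ for every $t$, which is the easy half.

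For the conjecture one needs a matching lower bound at cofinitely many $t$. Given $\e>0$, pick $n_0$ with $\deg(f_T^{n_0})^{1/n_0}\le\d(f_T)+\e/2$, and set $g:=f^{n_0}$. A naive attempt — restrict to $t\notin S_{n_0}$ and try to conclude $\d(f_t)$ is close to $\d(f_T)$ — fails, because the equality $\deg(f_t^{n_0})=\deg(f_T^{n_0})$ on the complement of $S_{n_0}$ only yields the wrong-direction upper bound $\d(f_t)\le\deg(f_t^{n_0})^{1/n_0}$. The clean workaround is to assume $g_t$ is \emph{algebraically stable} for every $t$ outside some finite set: then $\d(f_t)^{n_0}=\d(g_t)=\deg(g_t)=\deg(f_T^{n_0})\ge\d(f_T)^{n_0}$, which combined with the easy inequality actually forces $\d(f_t)=\d(f_T)$ on a cofinite subset and proves the conjecture with room to spare.

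The main obstacle is exactly to establish this algebraic stability uniformly in $t$. Under specialization the indeterminacy loci of the iterates $f_t^k$ may acquire new components or develop unwanted incidences with the images of earlier iterates, and a priori these pathologies could occur at infinitely many parameters even when $f_T$ itself is algebraically stable. A successful attack would likely require either a simultaneous resolution of indeterminacy — a birational model $\Xcal\to\PP^N_T$ on which $f_T$ (and hence every $f_t$) becomes algebraically stable — or a replacement provided by positive closed currents in the spirit of Dinh and Sibony. For the monomial case the obstruction disappears because $\d(f)$ then equals the spectral radius of the exponent matrix of the family and so is literally constant in $t$; this is presumably the source of the sufficient condition that the paper verifies. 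In full generality, arranging algebraic stability uniformly in families is precisely what keeps the conjecture open.
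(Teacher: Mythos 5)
The statement you were handed is a \emph{conjecture} in the paper, not a theorem, so there is no proof of it to compare against; you correctly recognize this, and your write-up is honestly presented as a sketch of partial progress rather than an argument. Your easy-direction observation $\d(f_t)\le\d(f_T)$ (degrees of iterates can only drop under specialization along a curve base) is sound, and you correctly locate the difficulty in the matching lower bound at cofinitely many $t$.

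Your proposed sufficient condition, however, is not the one the paper actually isolates, and it is noticeably stronger. You would choose a high iterate $g=f^{n_0}$ and assume $g_t$ is algebraically stable for all $t$ outside a finite set, and you correctly flag this uniform stability as the obstruction. The paper's conditional reduction (Theorem~\ref{theorem:thm4implythm1}, adapting Xie) is instead to Conjecture~\ref{conjecture:4}: the existence of a constant $\g_N>0$, \emph{independent of the map}, with $\d(f)\ge\g_N\cdot\min_{0\le k<N}\deg(f^{k+1})/\deg(f^k)$ for every dominant rational self-map of $\PP^N$. This is a quantitative lower bound on $\d(f)$ asked to hold with no stability hypothesis whatsoever, and that robustness is precisely what makes the reduction work: one picks $m$ so that the ratios $\deg(f_T^{(k+1)m})/\deg(f_T^{km})$ already approximate $\d(f_T)^m$, restricts to the Zariski-open set $U_\e\subset T$ where $\deg(f_t^{km})=\deg(f_T^{km})$ for $0\le k\le N$, and applies Conjecture~\ref{conjecture:4} fiberwise to $f_t^m$. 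No specialization $f_t^m$ needs to be algebraically stable. (Note also that for algebraically stable $f$ the inequality of Conjecture~\ref{conjecture:4} is trivial with $\g_N=1$, so your route is a special case of the paper's, not an alternative.) Two further remarks: the paper records Xie's theorem that Conjecture~\ref{conjecture:1} is known for families of birational self-maps of $\PP^2$, which you do not mention; and your monomial observation, while true (the integer exponent matrix is constant along a connected base, so $\d(f_t)$ is literally constant and Conjecture~\ref{conjecture:1} is vacuous there), slightly misreads the paper's monomial contribution — what is actually proved (Corollary~\ref{corollary:dfAgeminfAk}) is the non-trivial inequality Conjecture~\ref{conjecture:4} for monomial maps with $\g_N=(2^{1/N}-1)/2N^2$, offered as evidence for the general Conjecture~\ref{conjecture:4}, not as a special case of Conjecture~\ref{conjecture:1}.
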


\begin{question}  
\label{question:2}
Suppose that~$T$ and~$f_T$ are defined over~$\Qbar$. Under what
circumstances is the \emph{exceptional set}
\[
  \Ecal(f_T) :=
  \bigl\{ t\in T(\Qbar) : \d(f_t) < \d(f_T) \bigr\}
\]
a set of bounded height?
\end{question}

\begin{question}
\label{question:3}
Let $g_T:\PP^N_T\dashrightarrow\PP^N_T$ be another family of dominant
rational maps, and let~$\Ecal(f_T)$ and~$\Ecal(g_T)$ be exceptional
sets as defined in Question~$\ref{question:2}$. Under what
circumstances does the following implication
hold\textup{:}\footnote{We recall that the \emph{symmetric set
    difference} of two sets~$A$ and~$B$ is the set $A\trianglebin B :=
  (A\cup B)\setminus (A\cap B)$, or alternatively $A\trianglebin B :=
  (A\setminus B)\cup(B\setminus A)$.}
\[
  \text{$\Ecal(f_T)\cap\Ecal(g_T)$ is infinite}
  \quad\Longrightarrow\quad
  \text{$\Ecal(f_T)\trianglebin \Ecal(g_T)$ is finite?}
\]
\end{question}

Conjecture~\ref{conjecture:1} is inspired by
Xie~\cite[Theorem~4.1]{arxiv1106.1825}, a special case of which
implies that Conjecture~\ref{conjecture:1} is true for families of
birational maps of~$\PP^2$.\footnote{In a private communication, Xie
  has indicated that the methods used in~\cite{arxiv1106.1825} can
  be used prove Conjecture~\ref{conjecture:1} for dominant
  rational self-maps of~$\PP^2$.} Our primary goal in this paper is to
provide justification for studying Questions~\ref{question:2}
and~\ref{question:3} by analyzing in depth an interesting
three-parameter family of rational maps and showing that the
questions are true for one-parameter subfamilies. The maps
$f_{a,b,c}:\PP^2\dashrightarrow\PP^2$ that we study are defined by
\begin{equation}
  \label{eqn:fabcintro}
  f_{a,b,c}\bigl([X,Y,Z]\bigr)=[XY,XY+aZ^2,bYZ+cZ^2].
\end{equation}
For $abc\ne0$, we first show that $\d(f_{a,b,c})<2$ if and only if
there is a root of unity~$\xi$ with the property that
$c^2=(\xi+\xi^{-1})^2ab$; cf.\ Theorem~\ref{theorem:ZKabc}.
Taking~$a,b,c$ to be polynomials in one variable, we use this
criterion to prove Questions~\ref{question:2}
and~\ref{question:3} for 1-parameter subfamilies of the
family~\eqref{eqn:fabcintro}.

\begin{theorem}
Let~$f_{a,b,c}:\PP^2\dashrightarrow\PP^2$ be the map~\eqref{eqn:fabcintro}.
\begin{parts}
\Part{(a)}
\textup{(Corollary \ref{corollary:fatbtct}):}\enspace
Let $a(T),b(T),c(T)\in\Qbar[T]$ be non-zero polynomials
satisfying $\d(f_{a(T),b(T),c(T)})=2$. Then the exceptional set
\[
  \Ecal(f_{a(T),b(T),c(T)})
  = \bigl\{ t\in\Qbar : \d(f_{a(t),b(t),c(t)}) < 2 \bigr\}
\]
is a set of bounded height.
\Part{(b)}
\textup{(Theorem \ref{theorem:unlikelyfabc}):}\enspace
Let $a_1(T),b_1(T),c_1(T),a_2(T),b_2(T),c_2(T)\in\Qbar[T]$ be non-zero
polynomials such that 
\[
  \text{$\d(f_{a_1(T),b_1(T),c_1(T)})=2$\quad and\quad
    $\d(f_{a_2(T),b_2(T),c_2(T)})=2$.}
\]
Then
\begin{multline*}
  \#\Bigl(\Ecal(f_{a_1(T),b_1(T),c_1(T)})\cap\Ecal(f_{a_2(T),b_2(T),c_2(T)})\Bigr)
     =\infty\\
  \quad\Longrightarrow\quad
  \#\Bigl(\Ecal(f_{a_1(T),b_1(T),c_1(T)})\trianglebin
  \Ecal(f_{a_2(T),b_2(T),c_2(T)})\Bigr) < \infty.
\end{multline*}
\end{parts}
\end{theorem}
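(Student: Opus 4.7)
The plan is to reduce both parts to the behavior of
\[
  \varphi_i(T) := \frac{c_i(T)^2}{a_i(T)\, b_i(T)} \in \Qbar(T), \quad i = 1, 2.
\]
By Theorem~\ref{theorem:ZKabc}, excluding the finitely many $t \in T(\Qbar)$ at which $a_ib_ic_i$ vanishes, one has $t \in \Ecal(f_{a_i(T),b_i(T),c_i(T)})$ precisely when $\varphi_i(t) \in S$, where
\[
  S := \bigl\{ (\xi+\xi^{-1})^2 : \xi \in \mu_\infty \bigr\}.
\]
Applied to the generic fiber, the hypothesis $\delta(f_{a_i(T),b_i(T),c_i(T)}) = 2$ forbids $\varphi_i$ from being constantly equal to an element of $S$; so whenever $\varphi_i$ is constant, its value lies outside $S$ and $\Ecal(f_{a_i,b_i,c_i})$ is finite, making (a) immediate and the hypothesis of (b) vacuous. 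I therefore assume each $\varphi_i$ is non-constant.

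For (a), every $y \in S$ is an algebraic integer whose Galois conjugates are themselves of the form $(\xi'+\xi'^{-1})^2$ and so lie in $[0,4]$; in particular $h(y) \le \log 4$. Viewing $\varphi_1 : \Tbar \to \PP^1$ as a non-constant morphism from a smooth projective compactification $\Tbar$ of $T$, functoriality of the Weil height on $\PP^1$ gives
\[
  h_{\PP^1}\bigl(\varphi_1(t)\bigr) = \deg(\varphi_1)\cdot h_{\Tbar}(t) + O(1),
\]
so $t \in \Ecal(f_{a,b,c})$ forces $h_{\Tbar}(t)$ to be bounded by an absolute constant.

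For (b), introduce $\eta = \xi^2$: since $\xi \in \mu_\infty \iff \eta \in \mu_\infty$ and $(\xi+\xi^{-1})^2 - 2 = \eta + \eta^{-1}$, the condition $\varphi_i(t) \in S$ is equivalent to the existence of $\eta_i \in \mu_\infty$ with $\eta_i + \eta_i^{-1} = \varphi_i(t) - 2$. Let $\tilde T \to T$ be the finite cover (of degree dividing $4$) obtained by normalizing the fiber product adjoining $\eta_1,\eta_2$ as solutions of $z^2 - (\varphi_i(T) - 2)z + 1 = 0$, and set $\Psi := (\eta_1,\eta_2) : \tilde T \to \GG_m^2$. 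Each $t \in \Ecal(f_{a_1,b_1,c_1}) \cap \Ecal(f_{a_2,b_2,c_2})$ has preimages in $\tilde T$ mapping under $\Psi$ to torsion points of $\GG_m^2$, so some irreducible component $C$ of $\Psi(\tilde T)$ contains infinitely many torsion points. By Laurent's theorem (the Manin--Mumford conjecture for $\GG_m^n$), $C$ is contained in a torsion coset of a one-dimensional subtorus: an identity
\[
  \eta_1^p\,\eta_2^q = \zeta \qquad \bigl(\zeta \in \mu_\infty,\ (p,q) \in \ZZ^2 \setminus \{(0,0)\}\bigr)
\]
holds on $C$. Neither $p$ nor $q$ may vanish, as this would force the corresponding $\eta_j$, and hence $\varphi_j$, to be constant, contradicting our reduction. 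With $pq \ne 0$, combining $\eta_1^p = \zeta\,\eta_2^{-q}$ with the fact that roots of $z^p - \omega$ for $\omega \in \mu_\infty$ are themselves in $\mu_\infty$ yields $\eta_1(\tilde t) \in \mu_\infty \iff \eta_2(\tilde t) \in \mu_\infty$ for every $\tilde t \in \tilde T(\Qbar)$. Pushing back down to $T$, one obtains $\varphi_1(t) \in S \iff \varphi_2(t) \in S$ outside a finite set, so $\Ecal(f_{a_1,b_1,c_1}) \trianglebin \Ecal(f_{a_2,b_2,c_2})$ is finite.

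The main obstacle lies in the bookkeeping for Part (b): the cover $\tilde T$ may be reducible, so one must select an irreducible component whose $\Psi$-image contains infinitely many torsion points and verify that it dominates $T$, so that the multiplicative relation furnished by Laurent's theorem propagates to a pointwise equivalence for all but finitely many $t \in T(\Qbar)$. The substantive input --- Manin--Mumford for $\GG_m^n$ --- does the real work once this geometric setup is in place.
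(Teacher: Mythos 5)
Your proof is correct and takes essentially the same route as the paper's. For part~(b) in particular, both arguments reduce to the Ihara--Serre--Tate / Laurent theorem on torsion points on curves in $\GG_m^2$: the paper pushes forward by $\F=(\f_1,\f_2)$ to get a curve $C\subset\PP^1\times\PP^1$ and then pulls back through the finite map $G=(\psi,\psi)$ with $\psi(z)=-(z+1)^2/z$, whereas you construct a cover $\tilde T\to T$ and push forward by $\Psi=(\eta_1,\eta_2)$; these are the same lift packaged differently, and the bookkeeping point you flag (choosing a component of $\tilde T$ dominating $T$) is genuine but routine. For part~(a), the paper bounds the roots of the polynomials $\z c(T)^2+(\z+1)^2a(T)b(T)$ as $\z$ ranges over roots of unity, using that these polynomials have bounded degree and bounded-height coefficients; your argument via $h(\f_1(t))\le\log 4$ and height functoriality for the morphism $\f_1:\Tbar\to\PP^1$ is an equivalent, arguably cleaner, repackaging of the same idea. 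One small slip: writing out the condition of Theorem~\ref{theorem:ZKabc} as $c^2/(ab)=-(\xi+\xi^{-1})^2$ shows the relevant target set is $-S$, not $S$ (the paper's introduction drops the sign, but the note following the definition of $\Zcal$ is correct); this changes nothing in your argument, since $-S$ is still a set of algebraic integers of height at most $\log 4$, and the torsion-coset argument is unaffected.
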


\begin{remark}
We observe that Conjecture~\ref{conjecture:1} and Question~\ref{question:3}
appear to be geometric, since they are stated over~$\CC$, while
Question~\ref{question:2} is clearly arithmetic in nature. This
dichotomy is, however, somewhat misleading, since proofs of unlikely
intersection statements such as Question~\ref{question:3}
invariably require a considerable amount of arithmetic. On the other
hand, Conjecture~\ref{conjecture:1} may well admit a geometric proof.
\end{remark}

\begin{remark}
We note that Question~\ref{question:3} should be only half the
story.  The other half would be a statement saying that if
$\Ecal(f_T)\cap\Ecal(g_T)$ is infinite, then~$f_T$ and~$g_T$ are
``geometrically dependent.'' We do not currently know how to formulate
this precisely.
\end{remark}

\begin{remark}
\label{remark:xieexample}
The conjectures, questions, examples, and results in this paper were inspired by
work of Xie~\cite{arxiv1106.1825}. In particular, he proves a
beautiful theorem on the reduction modulo~$p$ of a birational
map~$f:\PP^2_\QQ\dashrightarrow\PP^2_\QQ$. In the context of
``degeneration of dynamical degree in families,'' Xie's map~$f$ should
be viewed as a family of maps over~$T=\Spec\ZZ$, and the reduction
$\tilde f_p:\PP^2_{\FF_p}\dashrightarrow\PP^2_{\FF_p}$ of~$f$
modulo~$p$ is the specialization of~$f$ to the fiber over~$p$.
Xie~\cite{arxiv1106.1825} proves that
\[
  \lim_{p\to\infty} \d(\tilde f_p) = \d(f).
\]
One might suspect that in fact $\d(\tilde f_p) = \d(f)$ for all
sufficiently large primes~$p$, but Xie gives an intriguing
example~\cite[Section~5]{arxiv1106.1825} of a birational map
\[
  f:\PP^2_\QQ\longrightarrow\PP^2_\QQ,\quad
  f\bigl([X,Y,Z]\bigr)=[XY,XY-2Z^2,YZ+3Z^2]
\]
having the property that there is a strict inequality $\d(\tilde
f_p)<\d(f)$ for all primes~$p$.
\end{remark}

A fundamental inequality from~\cite{arxiv1106.1825} that Xie uses to
study dynamical degrees in families says that there is an absolute
constant~$\g>0$ such that
\[
  \d(f) \ge \g\cdot \frac{\deg(f^2)}{\deg(f)}
  \quad\text{for all birational maps $f:\PP^2\dashrightarrow\PP^2$.}
\]
The crucial point here is that~$\g$ is independent of~$f$, so for
example, one can replace~$f$ by~$f^n$ without changing~$\g$. We ask
whether such estimates hold more generally.

\begin{conjecture}
\label{conjecture:4}
Let $N\ge1$. There exists a constant~$\g_N>0$ such that
for all dominant rational maps $f:\PP^N\dashrightarrow\PP^N$ we have
\[
  \d(f) \ge \g_N\cdot \min_{0\le k< N}  \frac{\deg(f^{k+1})}{\deg(f^{k})}. 
\]
\end{conjecture}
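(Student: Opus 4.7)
The plan is to imitate, in arbitrary dimension $N$, the strategy that Xie employs to handle the birational surface case in~\cite{arxiv1106.1825}. Fix a dominant rational map $f\colon\PP^N\dashrightarrow\PP^N$. After passing to an iterate of~$f$ and to a smooth projective birational model $\pi\colon X\to\PP^N$, I would arrange for the lift $\tilde f\colon X\dashrightarrow X$ to be algebraically stable, so that the pullback $M:=\tilde f^*$ on $\NS(X)_\RR$ satisfies $(\tilde f^n)^*=M^n$ and $\rho(M)=\delta(f)$. Writing $v=\pi^*H$ for the pulled-back hyperplane class on~$X$, one recovers $\deg(f^n)=\langle M^nv,w\rangle$ for a fixed numerical class~$w$. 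Passing to an iterate affects~$\gamma_N$ only by a factor depending on~$N$, so one may work on~$X$ without loss.

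Next I would apply the Jordan decomposition of~$M$ in order to analyse the sequence $M^nv$ inside $\NS(X)_\RR$. The intermediate goal is a growth bound of the form $\deg(f^n)\le C\cdot n^{s-1}\delta(f)^n$, where~$s$ is essentially the size of the largest Jordan block of~$M$ that couples $v$ to $w$, or equivalently the order of the minimal linear recurrence satisfied by the sequence $d_n=\deg(f^n)$. An elementary calculation shows that for any sequence of the form $P(n)\delta^n$ with $\deg P\le s-1$, among any $s$ consecutive ratios $d_{k+1}/d_k$ there is one of size $O_s(\delta)$; hence a bound $s\le N$ would yield Conjecture~\ref{conjecture:4} with an explicit~$\gamma_N$.

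The crucial and most difficult step is precisely the bound $s\le N$. For $N=2$ this is what Xie's argument supplies, using surface-theoretic tools: the Hodge index theorem, the two-dimensional positive cone, and the Diller--Favre classification of birational surface maps. In higher dimensions the N\'eron--Severi rank of the resolution~$X$ can be arbitrarily large, so the bound cannot come from $\NS(X)$ alone; instead one must exploit the fact that the source is $\PP^N$, whose numerical cohomology has rank one in each codimension. I would attempt to establish the bound by combining the log-concavity of the higher dynamical degrees, $\delta_p(f)^2\ge\delta_{p-1}(f)\delta_{p+1}(f)$, with Khovanskii--Teissier type positivity inequalities, in order to express the recurrence length of~$d_n$ in terms of intersection numbers on~$\PP^N$ rather than on~$X$. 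This is the main obstacle, and without a new geometric input of this flavour I do not see how to avoid the resolution~$X$ dominating the argument.

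Granting the bound $s\le N$, the remainder is a routine application of the Cayley--Hamilton theorem to the restriction of~$M$ to the cyclic submodule generated by~$v$, followed by the elementary sequence calculation mentioned above. Optimizing over all Jordan forms of size at most~$N$ with spectral radius~$\delta(f)$ would then yield an admissible~$\gamma_N$, presumably of order $1/N!$ without further refinement.
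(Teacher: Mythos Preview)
The paper does not prove Conjecture~\ref{conjecture:4}; it is stated as an open conjecture. The only case established in the paper is the monomial case (Corollary~\ref{corollary:dfAgeminfAk}), and that proof is purely elementary: for $\varphi_A$ one has $\deg(\varphi_A^n)=D(A^n)\asymp\|A^n\|$ and $\delta(\varphi_A)=\lambda(A)$ by Hasselblatt--Propp, and then Cayley--Hamilton applied directly to the $N\times N$ matrix~$A$ gives, after a short estimate, some $0\le k<N$ with $\|A^{k+1}\|\le(2^{1/N}-1)^{-1}\lambda(A)\|A^k\|$. No resolution, no N\'eron--Severi group, no Jordan form is needed; the point is that for monomial maps the relevant linear operator already lives on an $N$-dimensional space, so your desired bound ``$s\le N$'' is automatic.

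Your proposal, by contrast, aims at the full conjecture, and it has a genuine gap before you even reach the difficulty you flag. The sentence ``After passing to an iterate of~$f$ and to a smooth projective birational model $\pi\colon X\to\PP^N$, I would arrange for the lift $\tilde f$ to be algebraically stable'' presumes something that is not known in dimension $N\ge3$: the existence of algebraically stable models for arbitrary dominant rational self-maps of $\PP^N$ is a well-known open problem (Diller--Favre handles birational maps of surfaces, but there is no general analogue). Without this step you do not get $(\tilde f^n)^*=M^n$, and the rest of the argument does not start. You are right that the bound $s\le N$ is the second obstacle, and you correctly identify why the monomial case is special in this respect; but as written the proposal already fails at the stabilization step, and the paper makes no claim to overcome either obstruction in general.
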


It is possible that Conjecture~\ref{conjecture:4} is too optimistic,
and we should instead take a minimum over $0\le k\le\kappa(N)$ for
some upper index that grows more rapidly with~$N$, but we will at
least prove that Conjecture~\ref{conjecture:4} is true as stated for
monomial maps. More precisely, we prove that if
$f:\PP^N\dashrightarrow\PP^N$ is a dominant monomial map, then
Conjecture~\ref{conjecture:4} holds with $\g_N=(2^{1/N}-1)/2N^2$; see
Section~\ref{section:monomialmaps}.


We briefly summarize the contents of this article:
\begin{parts}
\Part{\S\ref{section:anexample}}
We study the geometry and algebraic stability
of the family of maps~$f_{a,b,c}$
defined by~\eqref{eqn:fabcintro},
and we answer Question~\ref{question:2} affirmatively for these families when~$a,b,c$
are polynomials of one variable.
\Part{\S\ref{section:unlikelyint}}
We answer Question~\ref{question:3} affirmatively for a pair of
families of maps $f_{a_1,b_1,c_1}$ and~$f_{a_2,b_2,c_2}$, where the~$a_i,b_i,c_i$
are again polynomials of one variable.
\Part{\S\ref{section:neganswers}}
We describe families of maps shown to us by Junyi Xie for which
Questions~\ref{question:2} and~\ref{question:3} have negative answers. 
\Part{\S\ref{section:conj4implyconj1}}
We sketch a proof (essentially due to Xie) that
Conjecture~\ref{conjecture:4} implies Conjecture~\ref{conjecture:1},
more generally over higher dimensional base varieties; see
Theorem~\ref{theorem:thm4implythm1}.
\Part{\S\ref{section:monomialmaps}}
We prove Conjecture~\ref{conjecture:4} for monomial maps.
\end{parts}

\begin{acknowledgement}
The authors would like to thank Serge Cantat for providing an example
showing that our original version of Conjecture~\ref{conjecture:4} was
too optimistic and that our original proof of
Conjecture~\ref{conjecture:4} for monomial maps was incorrect, Mattias
Jonsson for showing us the proof of
Proposition~\ref{proposition:degfinvledegfn-1}, and Wonwoong Lee for
pointing out an error in the proof of
Proposition~\ref{proposition:invim}(c).  And most importantly, we
thank Junyi Xie for providing the examples described in
Section~\ref{section:neganswers} that dashed our original expectation
that Questions~\ref{question:2} and~\ref{question:3} would always have
affirmative answers, as well as Xie's simplification of our original
proof of Theorem~\ref{theorem:unlikelyfabc}.
\end{acknowledgement}

\section{A Bounded Height Example}
\label{section:anexample}

In this section we study a family of rational maps inspired by Xie's
map~\cite[Section~5]{arxiv1106.1825} described in
Remark~\ref{remark:xieexample}.  We set the following notation.

\begin{definition}
Let $\Rcal$ be an integral domain with field of fractions~$\Kcal$. For
each triple $a,b,c\in\Rcal$, let $f_{a,b,c}:\PP^2_\Rcal\to\PP^2_\Rcal$
be the rational map
\[
  f_{a,b,c}\bigl([X,Y,Z]\bigr)=[XY,XY+aZ^2,bYZ+cZ^2].
\]
We also define the set of \emph{exceptional triples} to be
\[
  \Zcal(\bar\Kcal) = 
     \left\{ (a,b,c)\in\AA^3(\bar\Kcal) : 
           \begin{array}{c}
               \z c^2 + (\z+1)^2ab = 0~\text{for some}\\[.5\jot]
               \text{root of unity $\z\in\bar\Kcal$}\\
           \end{array} \right\}.
\]
(We note that replacing~$\z$ by~$\z^2$, we could alternatively
define~$\Zcal(\bar\Kcal)$ to be the set of triples satisfying
$c^2=-(\z+\z^{-1})^2ab$.)
\end{definition}

\begin{theorem}
\label{theorem:ZKabc}
Let $(a,b,c)\in\AA^3(\bar\Kcal)$ with $abc\ne0$. Then
\[
  \text{$f_{a,b,c}$ is algebraically stable}
  \quad\Longleftrightarrow\quad
  (a,b,c)\notin\Zcal(\bar\Kcal).
\]
\end{theorem}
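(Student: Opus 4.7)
The plan is to apply the Diller--Favre algebraic-stability criterion on $\PP^2$: a dominant rational self-map is algebraically stable if and only if no forward iterate of an $f$-contracted curve lies in the indeterminacy locus $I(f)$. I would first read off the geometry of $f=f_{a,b,c}$. Under the assumption $abc\ne0$, the simultaneous vanishing of $XY$, $XY+aZ^2$ and $bYZ+cZ^2$ forces $Z=0$ and $XY=0$, so $I(f)=\{[1,0,0],\,[0,1,0]\}$. A direct computation shows that $f$ contracts exactly two lines: $\{Z=0\}\to[1,1,0]$ and $\{Y=0\}\to[0,a,c]$. Since $[1,1,0]$ is a fixed point of $f$ and lies outside $I(f)$, the orbit of $\{Z=0\}$ contributes no obstruction, and everything reduces to controlling the forward orbit of $[0,a,c]$.

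The key structural observation is that $\{X=0\}$ is forward-invariant:
\[
f([0,Y,Z]) \;=\; [0,\,aZ^2,\,Z(bY+cZ)],
\]
and, after cancelling the common factor $Z$ (which merely reflects that the indeterminacy point $[0,1,0]$ lies on $\{X=0\}$), the restriction $f|_{\{X=0\}}$ is the projective linear map given by the matrix
\[
M \;=\; \begin{pmatrix} 0 & a\\ b & c\end{pmatrix}
\]
acting on the coordinates $[Y,Z]$ of $\{X=0\}\cong\PP^1$. The indeterminacy point $[0,1,0]$ corresponds to $[Y,Z]=[1,0]$. Writing $(Y_n,Z_n)^T:=M^n(a,c)^T$, the forward orbit of $[0,a,c]$ hits $I(f)$ precisely when $Z_n=0$ for some $n\ge0$; consequently $f$ is algebraically stable if and only if $Z_n\ne 0$ for every $n\ge 0$.

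The remaining step is the eigenvalue analysis. The characteristic polynomial of $M$ is $t^2-ct-ab$, whose roots $\lambda,\mu$ satisfy $\lambda+\mu=c$ and $\lambda\mu=-ab$. In the generic case $\lambda\ne\mu$, diagonalization gives the closed form
\[
Z_n \;=\; \frac{\lambda^{n+2}-\mu^{n+2}}{\lambda-\mu},
\]
so $Z_n=0$ for some $n\ge0$ exactly when $\z:=\lambda/\mu$ is a root of unity. The degenerate case $\lambda=\mu$ (equivalently $c^2+4ab=0$) is handled by a Jordan-form calculation giving $Z_n=(n+2)(c/2)^{n+1}\ne 0$ for all $n\ge0$, so $f$ is algebraically stable there. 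To translate ``$\z=\lambda/\mu$ is a root of unity'' into the defining relation of $\Zcal$, I would write $\lambda=\z\mu$ and combine $\mu(1+\z)=c$ (note $\z\ne-1$, else $c=0$) with $\z\mu^2=-ab$ to get $\z c^2+(\z+1)^2 ab=0$. Conversely, this equation depends on $\z$ only through $\z+\z^{-1}$, so it forces $\lambda/\mu\in\{\z,\z^{-1}\}$, which is again a root of unity.

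The main obstacle is ensuring that the reduction to one-variable Möbius dynamics on $\{X=0\}$ really captures the entire obstruction to algebraic stability---this is exactly what the Diller--Favre criterion provides, since only the forward orbits of the two $f$-contracted lines can cause degree drop---together with the careful treatment of the coincident-eigenvalue case via Jordan form. Once these two pieces are in place, the rest of the argument is routine linear algebra and a short algebraic manipulation.
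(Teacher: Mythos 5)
Your proposal takes essentially the same route as the paper: identify the two contracted lines, observe that $\{Z=0\}$ maps to the fixed point $[1,1,0]$ and so causes no trouble, reduce the question to whether the forward orbit of $[0,a,c]$ along the $f$-invariant line $\{X=0\}$ ever reaches the indeterminacy point $[0,1,0]$, encode that orbit by the linear recurrence with matrix $\left(\begin{smallmatrix}0&a\\ b&c\end{smallmatrix}\right)$, and finish with the eigenvalue/root-of-unity computation that produces $\z c^2+(\z+1)^2ab=0$. The only cosmetic differences are that you start the recurrence at $(a,c)$ rather than the paper's $(0,1)$ (so your $Z_n$ is the paper's $V_{n+1}$), and that you invoke Diller--Favre by name where the paper states the stability criterion directly.

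One caveat worth flagging, which your write-up shares with the paper's proof: in the degenerate case $\lambda=\mu$ (equivalently $c^2+4ab=0$) you correctly compute $Z_n=(n+2)(c/2)^{n+1}\ne0$ and conclude that $f$ \emph{is} algebraically stable; but a triple with $c^2+4ab=0$ satisfies $\z c^2+(\z+1)^2ab=0$ with the root of unity $\z=1$, so it lies in $\Zcal$ as defined. Your earlier claim that ``$Z_n=0$ for some $n\ge0$ exactly when $\lambda/\mu$ is a root of unity'' is therefore false at $\lambda/\mu=1$, and the implication ``$(a,b,c)\in\Zcal\Rightarrow f$ not algebraically stable'' fails there. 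The fix (for both you and the source) is to exclude $\z=1$, or to note that the derivation of the $\z$-criterion is carried out under the standing hypothesis $\lambda\ne\mu$, which forces $\z\ne1$ automatically.
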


\begin{corollary}
\label{corollary:fatbtct}
Let $a(T),b(T),c(T)\in\Qbar[T]$ be non-zero polynomials such that
$f_{a(T),b(T),c(T)}$ is algebraically stable. Then
\[
  \bigl\{ t\in\Qbar : 
   \text{$f_{a(t),b(t),c(t)}$ is not algebraically stable} \bigr\}
\]
is a set of bounded height.
\end{corollary}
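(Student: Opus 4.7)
The plan is to apply Theorem~\ref{theorem:ZKabc} in order to convert algebraic stability of a specialization $f_{a(t),b(t),c(t)}$ into a single equation relating $a(t)$, $b(t)$, $c(t)$, and then to control that equation by heights. Set $\phi(T):=c(T)^{2}/\bigl(a(T)b(T)\bigr)\in\Qbar(T)$. After discarding the finitely many $t\in\Qbar$ at which $a(T)b(T)c(T)$ vanishes, Theorem~\ref{theorem:ZKabc}, together with the alternative description of $\Zcal$ noted in the parenthetical remark, says that each remaining exceptional $t$ satisfies
\[
  \phi(t)\;=\;-(\eta_{t}+\eta_{t}^{-1})^{2}
\]
for some root of unity $\eta_{t}\in\Qbar$ depending on $t$.

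Next I would bound the Weil height of the right-hand side uniformly in $t$. By Kronecker's theorem, all Galois conjugates of a root of unity are again roots of unity, so every Galois conjugate of the algebraic integer $-(\eta_{t}+\eta_{t}^{-1})^{2}$ lies in the real interval $[-4,0]$. Since an algebraic integer contributes nothing from finite places to the Weil height, this yields the uniform estimate $h\bigl(\phi(t)\bigr)\le\log 4$ for every exceptional $t$.

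The last step is to convert a height bound on $\phi(t)$ into a height bound on $t$, and here I would split on whether the rational function $\phi(T)$ is constant. If $\phi(T)$ is non-constant, the functoriality of heights under a morphism $\PP^{1}\to\PP^{1}$ gives $h\bigl(\phi(t)\bigr)=\deg(\phi)\,h(t)+O(1)$, so the uniform bound $h\bigl(\phi(t)\bigr)\le\log 4$ forces $h(t)$ to be bounded, as required. If instead $\phi(T)=C$ is a constant, then the existence of even one exceptional $t$ with $a(t)b(t)c(t)\ne 0$ would force $C=-(\eta+\eta^{-1})^{2}$ for some root of unity $\eta$, so that $c(T)^{2}=-(\eta+\eta^{-1})^{2}a(T)b(T)$ as an identity of polynomials; this places $(a(T),b(T),c(T))$ in $\Zcal\bigl(\overline{\Qbar(T)}\bigr)$, contradicting algebraic stability of $f_{a(T),b(T),c(T)}$ via Theorem~\ref{theorem:ZKabc}. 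Hence in this constant case the exceptional set is already contained in the finite zero locus of $a(T)b(T)c(T)$.

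The argument is mostly bookkeeping once Theorem~\ref{theorem:ZKabc} is in hand. The only real content is the uniform height bound on $-(\eta+\eta^{-1})^{2}$ for roots of unity $\eta$, which is immediate from Kronecker, together with the case split needed to rule out a constant $\phi(T)$; I do not expect a serious obstacle beyond these routine steps.
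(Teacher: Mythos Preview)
Your argument is correct. Both proofs start from Theorem~\ref{theorem:ZKabc}, but they diverge in how the height bound is extracted.

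The paper works on the polynomial side: for each root of unity~$\zeta$, the exceptional~$t$'s are roots of $\zeta c(T)^2+(\zeta+1)^2a(T)b(T)$, and these polynomials have degree bounded by $\max(2\deg c,\deg a+\deg b)$ and coefficients of height bounded independently of~$\zeta$; the standard estimate for heights of roots in terms of degree and coefficient height then finishes the argument in one stroke. The hypothesis of algebraic stability is used only implicitly, to guarantee that none of these polynomials is identically zero.

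You instead pass to the rational function $\phi(T)=c(T)^2/\bigl(a(T)b(T)\bigr)$, observe that the exceptional values $\phi(t)=-(\eta+\eta^{-1})^2$ are algebraic integers with all archimedean conjugates in~$[-4,0]$ and hence of height at most $\log 4$, and then invoke functoriality $h(\phi(t))=\deg(\phi)\,h(t)+O(1)$. This forces the case split on whether~$\phi$ is constant, which you handle cleanly by showing that a constant~$\phi$ together with a single exceptional specialization would place $(a(T),b(T),c(T))$ in $\Zcal\bigl(\overline{\Qbar(T)}\bigr)$, contradicting the hypothesis. Your route is slightly longer but more conceptual: it isolates the ``target set'' $\psi(\bfmu)\subset\PP^1$ of bounded height and pulls it back through~$\phi$, which is exactly the viewpoint the paper adopts later in Section~\ref{section:unlikelyint} when proving the unlikely-intersection result. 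The paper's route is shorter here because it avoids any case analysis, at the cost of a reference to a root-height estimate.
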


The key to proving Theorem~\ref{theorem:ZKabc} is an analysis of the
geometry of the map~$f_{a,b,c}$.

\begin{proposition}
\label{proposition:invim}
Let $a,b,c\in\Kcal$ with $abc\ne0$.
\begin{parts}
\Part{(a)}
The map $f_{a,b,c}$ is birational, and its indeterminacy locus
is the set
\[
  I(f_{a,b,c}) = \bigl\{ [0,1,0], [1,0,0] \bigr\}.
\]
\Part{(b)}
The critical locus of~$f$ is the set
\[
  \Crit(f) = \{Y=0\} \cup \{Z=0\}.
\]
\Part{(c)}
Let $[\a,\b,\g]\in\PP^2(\bar\Kcal)$. Then the set
$f_{a,b,c}^{-1}\bigl([\a,\b,\g]\bigr)$ consists of a single point
except in the following situations\textup:
\begin{align*} 
  f_{a,b,c}^{-1}\bigl([0,a,c]\bigr) &= \{Y=0\}\setminus\bigl\{[1,0,0]\bigr\},\\
  f_{a,b,c}^{-1}\bigl([1,1,0]\bigr) & =\{Z=0\}\setminus\bigl\{ [0,1,0], [1,0,0] \bigr\},\\
  f_{a,b,c}^{-1}\bigl([0,0,1]\bigr) & =\emptyset,\\
  f_{a,b,c}^{-1}\bigl([a,at,ct-c]\bigr) & =\emptyset~\text{for all $t$.}
\end{align*}
\end{parts}
\end{proposition}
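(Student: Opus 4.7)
The plan is to handle the three parts separately: parts (a) and (b) reduce to direct computation, while part (c) requires a careful case analysis of the defining equations.

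For part (a), I would first determine the indeterminacy locus by simultaneously solving $XY = 0$, $XY + aZ^2 = 0$, and $bYZ + cZ^2 = 0$. Subtracting the first from the second and using $a \neq 0$ forces $Z = 0$, after which the third equation is automatic and $XY = 0$ picks out only the two claimed points $[0:1:0]$ and $[1:0:0]$. To prove birationality, the cleanest route is to exhibit an explicit rational inverse. The relations $u = XY$, $v - u = aZ^2$, $aw - c(v-u) = abYZ$ motivate the candidate
\[
  g([u:v:w]) = \bigl[\,ab^2 u(v-u)\,:\,(aw - c(v-u))^2\,:\,b(v-u)(aw - c(v-u))\,\bigr],
\]
and substituting $u = XY$, $v = XY + aZ^2$, $w = bYZ + cZ^2$ into $g$ should yield $a^2b^2 YZ^2 \cdot [X:Y:Z]$, confirming that $g\circ f_{a,b,c}$ is the identity as a rational map.

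For part (b), the critical divisor of $f_{a,b,c}$ is the zero locus of the Jacobian determinant $\det(\partial F_i/\partial X_j)$, where $F_0 = XY$, $F_1 = XY + aZ^2$, $F_2 = bYZ + cZ^2$. Subtracting the first row from the second in the Jacobian exploits the cancellation $F_1 - F_0 = aZ^2$, and a short expansion produces $-2ab\,YZ^2$. Since $ab \neq 0$, the reduced vanishing locus is $\{Y = 0\} \cup \{Z = 0\}$.

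For part (c), I would fix a target $[\alpha:\beta:\gamma] \in \PP^2(\bar\Kcal)$ and analyze the system
\[
  XY = \lambda\alpha,\qquad aZ^2 = \lambda(\beta - \alpha),\qquad bYZ + cZ^2 = \lambda\gamma,
\]
for some $\lambda \neq 0$. The natural case split is on whether $\beta = \alpha$. When $\beta \neq \alpha$, the middle equation forces $Z \neq 0$, so we may normalize $Z = 1$; this determines $\lambda$, then $Y$, then $X$, yielding a unique preimage unless $a\gamma = c(\beta - \alpha)$. In the latter subcase $Y = 0$, which forces either $\alpha = 0$ (producing the point $[0:a:c]$, whose preimage is the contracted critical curve $\{Y = 0\}$ minus the indeterminacy point $[1:0:0]$) or $\alpha \neq 0$ (empty preimage along the line parameterized by $[a:at:ct-c]$). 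When $\beta = \alpha$, the middle equation forces $Z = 0$, so the third equation yields $\lambda\gamma = 0$: either $\gamma = 0$, giving $[1:1:0]$ with preimage $\{Z = 0\}$ minus the two indeterminacy points, or $\gamma \neq 0$, giving no preimage (the instance $[0:0:1]$ being singled out). The main obstacle, evidently the source of the error acknowledged in the author's note, is the bookkeeping: one must track precisely which indeterminacy points to remove from the contracted critical curves and how the parameterization $[a:at:ct-c]$ sweeps out the line $a\gamma = c(\beta-\alpha)$, including its limit as $t \to \infty$ which recovers the distinguished point $[0:a:c]$.
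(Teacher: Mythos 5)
Your plan for parts (a) and (b) matches the paper almost line for line: the same explicit inverse $g$ (yours is the paper's $g$ with renamed variables), the same observation that $a\neq 0$ forces $Z=0$ in the indeterminacy calculation, and the same Jacobian determinant $-2abYZ^2$ (your row reduction is a trivial streamlining of the paper's direct expansion).

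For part (c) your organization is genuinely a bit cleaner than the paper's, though the underlying computations are the same. The paper first disposes of $[0,0,1]$, then treats $\a=0,\b\neq0$, then $\a\neq0$; you instead split cleanly on $\b=\a$ versus $\b\neq\a$, using the relation $aZ^2=\l(\b-\a)$ to decide whether $Z$ vanishes. Your split makes visible something that is only implicit in the paper's chain of implications: when $\b=\a$ and $\g\neq0$ the preimage is empty for \emph{every} such target, not merely for $[0,0,1]$. You gesture at this (``the instance $[0{:}0{:}1]$ being singled out''), but you should say it outright: the points $[\a,\a,\g]$ with $\a\g\neq0$ (e.g.\ $[1,1,1]$) have empty preimage yet are not of the form $[a,at,ct-c]$ and are not $[0,0,1]$, so the proposition's list of exceptional targets, even in the corrected version, is still incomplete. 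A proof that silently confirms the stated list would therefore be wrong; a proof that, like yours, derives the full classification should note the discrepancy rather than absorb it into a parenthetical. Similarly, the line $[a,at,ct-c]$ passes through $[1,1,0]$ at $t=1$ and degenerates to $[0,a,c]$ as $t\to\infty$, so the phrase ``for all $t$'' really means ``for $t\neq1$''; you handle this correctly, but it is worth stating that the empty-preimage locus is precisely the union of the two lines $\{\a=\b\}$ and $\{\a c-\b c+\g a=0\}$ with the three points $[1,1,0]$, $[0,a,c]$, and the indeterminacy images removed. With that clarification your argument is complete and correct, and it reaches the same formulas the paper does.
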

\begin{proof}
To ease notation, we let $f=f_{a,b,c}$.
\par\noindent(a)\enspace
Let
\[
  g(X,Y,Z) = \bigl(ab^2X(Y-X),(cX-cY+aZ)^2,b(cX-cY+aZ)(Y-X)\bigr).
\]
Then an easy calculation in affine coordinates gives
\[
   g\circ f(X,Y,Z) = a^2b^2YZ^2\cdot (X,Y,Z), 
\]
which shows that~$f$ is birational with~$f^{-1}$ induced by~$g$;
cf.\ \cite[Section~5]{arxiv1106.1825}.\footnote{As
  in~\cite{arxiv1106.1825}, we can decompose~$f=f_2\circ f_1$ with
  biratonal maps $f_1= [XY,XY+bYZ,Z^2]$ and $f_2=[X,X+aZ,-X+Y+cZ]$.}
The indeterminacy locus of~$f$ is the set where
\[
  XY = XY+aZ^2 = bYZ+cZ^2 = 0.
\]
We note that $XY=0$ forces $aZ^2=0$, and hence $Z=0$ under our
assumption that~$a\ne0$. This gives two possible points in~$I(f)$,
namely~$[0,1,0]$ and~$[1,0,0]$, and it is clear that these  points
are in~$I(f)$.
\par\noindent(b)\enspace
The critical locus~$\Crit(f)$ of~$f$ is the set where
\[
  \det\begin{pmatrix}Y & X & 0 \\ Y & X & 2aZ \\ 0 & bZ & bY+2cZ^2\\ \end{pmatrix}
  = -2abYZ^2=0.
\]  
\par\noindent(c)\enspace
It is a standard fact that if $\G\subset\PP^2$ is a curve with the
property that $f(\G)$ is a point, then necessarily
$\G\subseteq\Crit(f)$; see for
example~\cite[Lemma~23(c)]{arxiv1607.05772}.  We compute
\[
  \{Y=0\} \xrightarrow{\;f\;} [0,a,c]
  \quad\text{and}\quad
  \{Z=0\} \xrightarrow{\;f\;} [1,1,0] \in \Fix(f_{a,b,c}).
\]
An easy calculation shows that
\[
  f^{-1}\bigl([0,a,c]\bigr) \subset \{Y=0\}
  \quad\text{and}\quad
  f^{-1}\bigl([1,1,0]\bigr) \subset \{Z=0\},
\]
so the inverse images are the indicated sets with~$I(f)$ removed.

It remains to determine for which points~$P=[\a,\b,\g]$ the inverse
image~$f^{-1}(P)$ is empty.  We include the proof, although we do
require this result in the sequel.

First, we have
\begin{align*}
f^{-1}\bigl([0,0,1]\bigr)
&\subseteq \bigl\{ [X,Y,Z] : XY=XY+aZ^2=0 \bigr\}\\
&=\bigl\{ [1,0,0],[0,1,0] \bigr\} = I(f),
\end{align*}
which shows that $f^{-1}\bigl([0,0,1]\bigr)=\emptyset$.

Next we consider $\a=0$ and $\b\ne0$ and compute
\[
  f^{-1}\bigl([0,\b,\g]\bigr)
  \subseteq \bigl\{ [X,Y,Z] : XY=0 \bigr\}
  = \bigl\{ [0,Y,Z] \bigr\} \cup \bigl\{ [X,0,Z] \bigr\}.
\]
Since
\[
  f\bigl([X,0,Z]\bigr) = [0,aZ^2,cZ^2] =
  \begin{cases}
    [0,a,c] &\text{if $Z\ne0$,} \\
    [0,0,1] \in I(f)&\text{if $Z=0$,}\\
    \end{cases}
\]
we have already dealt with this case. The other case
yields (note that $Z\ne0$, since we're assuming that $a\ne0$ and $\b\ne0$)
\[
  [0,\b,\g] = f\bigl([0,Y,Z]\bigr) = [0,aZ^2,bYZ+cZ^2] =[ 0,aZ,bY+cZ],
\]
so  solving for~$[Y,Z]$ gives
\[
  f^{-1}\bigl([0,\b,\g]\bigr) = [0,a\g-c\b,b\b],
\]
and thus $f^{-1}\bigl([0,\b,\g]\bigr)$ consists of a single point
if~$\b\ne0$.

It remains to consider $f^{-1}\bigl([\a,\b,\g]\bigr)$ with $\a\ne0$.
This set is contained in the set of $[X,Y,Z]$ satisfying the
simultaneous equations
\begin{equation}
  \label{eqn:axybxy}
  \a (X Y + a Z^2)   = \b X Y \quad\text{and}\quad
  \a (b Y Z + c Z^2) = \g X Y.
\end{equation}
We're assuming that $\a\ne0$, and the first coordinate of~$f$ is~$XY$,
so $XY\ne0$. We also note (using $a\a\ne0$ and $XY\ne0$)
\begin{align*}
  \a=\b
  \quad\Longrightarrow\quad
  Z=0
  \quad\Longrightarrow\quad
  \g XY = 0
  &\quad\Longrightarrow\quad 
  \g = 0\\
  &\quad\Longrightarrow\quad
  [\a,\b,\g]=[1,1,0],
\end{align*}
a case with which we have already dealt. So we may assume that
$\a\ne\b$, as well as~$\a\ne0$.

We eliminate~$X$ from the two equations in~\eqref{eqn:axybxy}. Thus
\begin{multline*}
0 = \g  \Bigl(  \overbrace{\a (X Y + a Z^2)   - \b X Y}^{\text{vanishes from \eqref{eqn:axybxy}}} \Bigr)
+
(\a - \b ) \Bigl(  \overbrace{\a (b Y Z + c Z^2) - \g X Y}^{\text{vanishes from \eqref{eqn:axybxy}}} \Bigr) \\
 = \a Z  \Bigl( (\a c   - \b c + \g a)Z + (\a-\b) b Y \Bigr).
\end{multline*}
We can rule out $Z=0$, since we're assuming that $\a\ne\b$, and
$f\bigl([X,Y,0]\bigr)=[XY,XY,0]\ne[\a,\b,0]$. Since we are in a case
with $(\a-\b)bY\ne0$, we see that $\a c - \b c + \g a=0$ leads to a
contradiction, so
\[
  \a c - \b c + \g a = 0 \quad\Longrightarrow\quad
  f^{-1}\bigl([\a,\b,\g]\bigr)=\emptyset.
\]
The relation $\a c - \b c + \g a=0$ with $\a\ne0$ is equivalent to the
point $[\a,\b,\g]$ having the form $[a,at,ct-c]$ for some~$t$, which
proves half of what we want. On the other hand, if $\a c - \b c + \g a\ne0$,
and continuing with the assumption $\a(\a-\b)abc\ne0$, we claim that
$f^{-1}\bigl([\a,\b,\g]\bigr)$ contains exactly one point. Indeed, we
can solve uniquely for
\[
  [Y,Z] = \bigl[\a c- \b c + \g a, (\a-\b)b \bigr],
\]
andt then~\eqref{eqn:axybxy} determines~$X$, which yields
\[
f^{-1}\bigl([\a,\b,\g]\bigr) =
[ -\a (\a-\b) a b^2,
  (\a c - \b c + \g a)^2,
 -(\a - \b) (\a c - \b c + \g a)b ].
\]
This completes the proof of Proposition~\ref{proposition:invim}(c).
\end{proof}

\begin{proof}[Proof of Theorem \textup{\ref{theorem:ZKabc}}]
To ease notation, we let $f=f_{a,b,c}$. The map~$f$ is not
algebracally stable if and only if there is a curve~$\G\subset\PP^2$
and an $N\ge1$ such that $f^N(\G)\subset I(f)$. 

In particular, if~$f$ is not algebracally stable, then there is some $0\le
n\le N-1$ such that $\dim f^n(\G)=1$ and $\dim
f^{n+1}(\G)=0$. Proposition~\ref{proposition:invim} tells us that the
only curves that~$f$ collapses are the curves~$Y=0$
and~$Z=0$. Further,
\[
  f\bigl(\{Z=0\}\bigr)=[1,1,0] \in \Fix(f),
\]
so if $f^n(\G)=\{Z=0\}$, then for all $k\ge1$ we have
$f^{n+k}(\G)=[1,1,0]\notin I(f)$. So we have shown that~$f$ is not
algebraically stable if and only if there is a curve~$\G$ and
integers~$N>n\ge0$ such that
\begin{equation}
  \label{eqn:fnGY0fNG010}
  f^n(\G)=\{Y=0\} 
 \quad\text{and}\quad
  f^N(\G)=[0,1,0].
\end{equation}
(We can't have $f^N(\G)$ equal to the other point $[1,0,0]$ in $I(f)$,
since $f\bigl(\{Y=0\}\bigr)=[0,a,c]$ and
$f\bigl(\{X=0\}\bigr)=\{X=0\}$, so once we get to a point with $X=0$,
applying~$f$ never gets back to a point with $X\ne0$.)

We observe that~\eqref{eqn:fnGY0fNG010} is true if and only if
\begin{align*}
  [0,1,0] 
  =  f^N(\G) 
  &= (f^{N-n-1}\circ f \circ f^n)(\G)\\
  &= (f^{N-n-1}\circ f) \bigl(\{Y=0\}\bigr)\\
  &= f^{N-n-1}\bigl([0,a,c]\bigr).
\end{align*}
So we have proven that $f$ is not algebraically stable if and only if
there is an $n\ge0$ such that the $Z$-coordinate of
$f^n\bigl([0,a,c]\bigr)$ vanishes. We also note that
$f\bigl([0,0,1]\bigr)=[0,a,c]$, so we may as well start
at~$[0,0,1]$. This prompts us to define polynomials
$U_n,V_n\in\ZZ[a,b,c]$ by the formula
\[
  [0,U_n,V_n] = f^n\bigl([0,0,1]\bigr).
\]
Then we have shown that
\[
  \text{$f_{a,b,c}$ is not algebraically stable}
  \;\Longleftrightarrow\;
  \text{$V_n(a,b,c)=0$ for some $n\ge1$.}
\]

We now observe that~$(V_n)_{n\ge0}$ is a linear recurrence, at least
until reaching a term that vanishes. Indeed, we have
\begin{align*}
  [0,U_{n+1},V_{n+1}]
  = f\bigl([0,U_n,V_n]\bigr)
  &= [0,aV_n^2,bU_nV_n+cV_n^2]\\
  &= [0,aV_n,bU_n+cV_n].
\end{align*}
Thus
\[
  \begin{pmatrix} U_{n+1} \\ V_{n+1}  \\ \end{pmatrix}
  =
  \begin{pmatrix} 0 & a \\ b & c \\ \end{pmatrix}
  \begin{pmatrix} U_n \\ V_n  \\ \end{pmatrix},
\]
and repeated application together with the initial value
$(U_0,V_0)=(0,1)$ gives the matrix formula
\[
  \begin{pmatrix} U_n \\ V_n  \\ \end{pmatrix}
  =
  \begin{pmatrix} 0 & a \\ b & c \\ \end{pmatrix}^n
  \begin{pmatrix} 0 \\ 1  \\ \end{pmatrix},
\]

Letting~$\l$ and~$\lbar$ be the eigenvalues
of~$\left(\begin{smallmatrix}0&a\\b&c\\\end{smallmatrix}\right)$,
  i.e., the roots of
\[
  T^2 - cT - ab = 0,
\]
an elementary linear algebra calculation yields
\[
  V_n = \frac{\l^{n+1}-\lbar^{n+1}}{\l-\lbar}.
\]
(Unless $c^2+4ab=0$, which we will deal with later.) So~$f$ is not
algebraically stable if and only if there is some $n\ge1$ such that
$\l^n=\lbar^n$. Writing~$\l$ and $\lbar$ explicitly, we find that~$f$
is not algebraically stable if and only if~$(a,b,c)$ satisfies
\[
  c+\sqrt{c^2+4ab} = \z \bigl(c-\sqrt{c^2+4ab}\bigr)
  \quad\text{for some root of unity $\z\in\Qbar$.}
\]
A little algebra yields
\[
  \z c^2 + (\z+1)^2 ab = 0,
\]
which is the desired result.
  
It remains to deal with the case that $c^2+4ab=0$, i.e., $\l=\lbar$.
But then an easy calculation shows that
\[
  V_n = (n+1)(c/2)^n,
\]
so $V_n$ never vanishes under our assumption that $abc\ne0$.
\end{proof}

\begin{proof}[Proof of Corollary $\ref{corollary:fatbtct}$]
According to Theorem~\ref{theorem:ZKabc}, the map $f_{a(t),b(t),c(t)}$ is
not algebraically stable if and only if there is a root of unity
$\z\in\Qbar$ with the property that
\[
  \z c(t)^2 + (\z+1)^2a(t)b(t) = 0.
\]
As $\z$ varies over roots of unity, the polynomials 
\begin{equation}
  \label{eqn:ct2xx2}
  \z c(T)^2 + (\z+1)^2a(T)b(T)\in\Qbar[T]
\end{equation}
have bounded degree (depending on~$a,b,c$) and have coefficients of
bounded height.  But the heights of the roots of a polynomial are easily
bounded in terms of the degree and the heights of the coefficients;
see for example \cite[Theorem~VIII.5.9]{MR2514094}.  Hence the roots of the
polynomials~\eqref{eqn:ct2xx2} have height bounded independently
of~$\z$.
\end{proof}


\section{An Unlikely Intersection Example}
\label{section:unlikelyint}
Our goal in this section is to give an affirmative answer to a
non-trivial case of Question~\ref{question:3} for the intersection of
the exceptional sets of two maps.

\begin{definition}
Let $\Rcal$ be an integral domain with field of fractions~$\Kcal$, and
for non-zero $a,b,c\in\Rcal$, let
$f_{a,b,c}:\PP^2_\Rcal\to\PP^2_\Rcal$ denote the map
\[
  f_{a,b,c}\bigl([X,Y,Z]\bigr)=[XY,XY+aZ^2,bYZ+cZ^2]
\]
that we already studied in Section~\ref{section:anexample}.  We define
the \emph{exceptional set of~$f_{a,b,c}$} to be the set of prime
ideals
\[
  \Ecal(f_{a,b,c}) 
  = \bigl\{ \gp\in\Spec(\Rcal) : 
           \d(\tilde f_{a,b,c}\bmod\gp) < \d(f_{a,b,c}) \bigr\}.
\]
\end{definition}

\begin{theorem}
\label{theorem:unlikelyfabc}
Let $a_1,b_1,c_1,a_2,b_2,c_2\in\Qbar[T]$ be non-zero polynomials.
For $i=1,2$,  let
\[
  f_{a_i,b_i,c_i}\bigl([X,Y,Z]\bigr)=[XY,XY+a_iZ^2,b_iYZ+c_iZ^2]
\]
be the associated families of rational maps, and assume that they are
algebraically stable as maps over the function field~$\Qbar(T)$.  Then
\begin{multline*}
  \#\Bigl( \Ecal(f_{a_1,b_1,c_1})\cap \Ecal(f_{a_2,b_2,c_2}) \Bigr) = \infty \\
  \Longrightarrow\quad
  \#\Bigl( \Ecal(f_{a_1,b_1,c_1})\trianglebin \Ecal(f_{a_2,b_2,c_2}) \Bigr) < \infty.
\end{multline*}
\end{theorem}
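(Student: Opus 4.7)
The plan is to translate the exceptional-set condition into a statement about torsion points on a curve in~$\GG_m^2$ and then to apply the Ihara--Serre--Tate theorem. By Theorem~\ref{theorem:ZKabc}, for all but the finitely many $t\in\Qbar$ with $a_i(t)b_i(t)c_i(t)=0$, we have $t\in\Ecal(f_{a_i,b_i,c_i})$ if and only if there exists a root of unity~$\zeta$ with $c_i(t)^2=-(\zeta+\zeta^{-1})^2a_i(t)b_i(t)$. Writing $w=\zeta^2$ (which ranges over all of $\mu_\infty$ as $\zeta$ does) and setting
\[
\psi_i(T):=-\frac{c_i(T)^2}{a_i(T)b_i(T)}-2\in\Qbar(T),
\]
this condition becomes $\psi_i(t)=w+w^{-1}$ for some $w\in\mu_\infty$. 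With $\s\colon\GG_m\to\AA^1$ denoting the degree-$2$ map $\s(w)=w+w^{-1}$ and $\mu_\infty$ the group of roots of unity, membership in $\Ecal(f_{a_i,b_i,c_i})$ reformulates as $\psi_i(t)\in\s(\mu_\infty)$. The hypothesis of algebraic stability over $\Qbar(T)$ translates, via the same theorem, to $\psi_i\notin\s(\mu_\infty)$ as an element of $\Qbar(T)$; in particular $\psi_i$ is never identically a constant lying in $\s(\mu_\infty)$.

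Let $\Psi:=(\psi_1,\psi_2)\colon T\to\AA^2$ and let $C\subseteq\AA^2$ be the Zariski closure of $\Psi(T)$. If some $\psi_i$ is a constant in $\Qbar\setminus\s(\mu_\infty)$ then $\Ecal(f_{a_i,b_i,c_i})=\varnothing$ and the hypothesis fails trivially, so we may assume both $\psi_i$ are non-constant and that $C$ is an irreducible affine plane curve. Choose an irreducible component $\widetilde C_0$ of $(\s\times\s)^{-1}(C)\subset\GG_m^2$ that dominates~$C$. Both roots of $w^2-y w+1=0$ lie in $\mu_\infty$ whenever $y\in\s(\mu_\infty)$, so $\s^{-1}(\s(\mu_\infty))=\mu_\infty$, and the hypothesis $\#\bigl(\Ecal(f_{a_1,b_1,c_1})\cap\Ecal(f_{a_2,b_2,c_2})\bigr)=\infty$ forces $\widetilde C_0\cap\mu_\infty^2$ to be infinite.

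We now invoke the Ihara--Serre--Tate theorem (equivalently Laurent's theorem, i.e., Manin--Mumford for~$\GG_m^n$): an irreducible curve in $\GG_m^2$ containing infinitely many torsion points must be a torsion coset. Hence $\widetilde C_0$ is defined by $u^pv^q=\zeta$ for some coprime integers $(p,q)\ne(0,0)$ and some root of unity~$\zeta$. Neither $p$ nor $q$ can vanish: if $q=0$, then $u^p=\zeta$ forces $u$ to be a constant root of unity on the irreducible curve $\widetilde C_0$, making $\psi_1=u+u^{-1}$ a constant lying in $\s(\mu_\infty)$, contradicting algebraic stability. With $p,q\ne0$, the relation $u^pv^q=\zeta$ and the closure of $\mu_\infty$ under extraction of $n$-th roots yield the equivalence $u\in\mu_\infty\Leftrightarrow v\in\mu_\infty$ on~$\widetilde C_0$.

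For all but finitely many $t\in\Qbar$ we may lift $\Psi(t)$ to a point $(u,v)\in\widetilde C_0$ under $\s\times\s$, since the induced map $\widetilde C_0\to C$ is dominant (and hence surjective off a finite set). Then
\[
\psi_1(t)\in\s(\mu_\infty)\ \Longleftrightarrow\ u\in\mu_\infty\ \Longleftrightarrow\ v\in\mu_\infty\ \Longleftrightarrow\ \psi_2(t)\in\s(\mu_\infty),
\]
giving $t\in\Ecal(f_{a_1,b_1,c_1})\Leftrightarrow t\in\Ecal(f_{a_2,b_2,c_2})$ outside a finite set; that is, $\Ecal(f_{a_1,b_1,c_1})\trianglebin\Ecal(f_{a_2,b_2,c_2})$ is finite. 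The main obstacle is the invocation of the Ihara--Serre--Tate theorem, a deep but classical result; a minor additional issue is that $(\s\times\s)^{-1}(C)$ may be reducible, but its irreducible components form a single orbit under the $(\ZZ/2)^2$-Galois action induced by $w\mapsto w^{-1}$ in each factor, so any component carrying infinitely many torsion points suffices.
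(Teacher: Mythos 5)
Your proposal is correct and follows essentially the same route as the paper's own proof: reduce via Theorem~\ref{theorem:ZKabc} to the condition that a rational function of~$t$ lands in the image of the roots of unity under a fixed degree-$2$ map, pull the parametrized curve back to~$\GG_m^2$ through the square of that degree-$2$ cover, apply the Ihara--Serre--Tate theorem to identify the pullback as a torsion coset, and deduce that the two conditions coincide off a finite set. Your $\s(w)=w+w^{-1}$ and the paper's $\psi(z)=-(z+1)^2/z$ are affinely related (one has $\psi(z)=-\s(z)-2$), and your $\psi_i=-c_i^2/(a_ib_i)-2$ is likewise an affine shift of the paper's $\f_i=c_i^2/(a_ib_i)$, so the two setups are the same up to relabeling. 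One small point where you actually add useful care beyond the paper: you explicitly rule out $p=0$ or $q=0$ in the torsion coset $u^pv^q=\z$ using the algebraic-stability hypothesis, and you explicitly dispose of the case where some $\psi_i$ is constant. The paper's argument implicitly relies on both $m,n\ne0$ for the claimed identity $V\cap(\bfmu\times\bfmu)=V\cap(\bfmu\times\PP^1)=V\cap(\PP^1\times\bfmu)$, so your version makes an implicit step explicit.
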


\begin{proof}
We recall that Theorem~\ref{theorem:ZKabc} says that
if~$a,b,c\in\Rcal$ are non-zero and if~$f_{a,b,c}$ is algebraically
stable, i.e., $\d(f_{a,b,c})=2$, then
\begin{equation*}
  \Ecal(f_{a,b,c}) = \left\{
       \gp\in\Spec(\Rcal) : 
           \begin{array}{c}
               \z c^2 + (\z+1)^2ab \equiv 0~\bigl(\text{mod}~\gp\Rcal[\z]\bigr)\\[.5\jot]
               \text{for some root of unity $\z\in\bar\Kcal$}\\
           \end{array} \right\}.
\end{equation*}
We apply this with $\Rcal=\Qbar[T]$.

To ease notation, for $i=1$ and~$2$, we let $F_i = f_{a_i,b_i,c_i}$,
and we define maps
\[
  \f_i : \PP^1_\Qbar\longrightarrow\PP^1_\Qbar,\quad
  \f_i(t) = \frac{c_i(t)^2}{a_i(t)b_i(t)}.
\]
Also let $S_i=\{t\in\Qbar:a_i(t)b_i(t)c_i(t)=0\}$, so in particular
each~$S_i$ is a finite set.

Let
\[ 
  \bfmu := \bigl\{ [\z,1] \in \PP^1(\Qbar) : \text{$\z$ is a root of unity} \bigr\},
\]
and consider the rational map
\[
  \psi : \PP^1_\Qbar\longrightarrow\PP^1_\Qbar,\quad
  \psi(z)= -\frac{(z+1)^2}{z}.
\]
Then the fact that $\psi(1/z)=\psi(z)$ implies that the set
$S:=\psi(\bfmu)$ satisfies $\psi^{-1}(S)=\bfmu$.

Theorem~\ref{theorem:ZKabc} tells us that
\[
  \Ecal(F_i)\setminus S_i=\f_i^{-1}(S)\setminus S_i,
\]
so we need to show that
\[
  \#\bigl(\f_1^{-1}(S)\cap\f_2^{-1}(S)\bigr)=\infty
  \quad\Longrightarrow\quad
  \f_1^{-1}(S)=\f_2^{-1}(S).
\]

So we suppose that
$\#\bigl(\f_1^{-1}(S)\cap\f_2^{-1}(S)\bigr)=\infty$.  Define
\[
  \F:=(\f_1,\f_2):\PP^1_\Qbar\longrightarrow\PP^1_\Qbar\times\PP^1_\Qbar,
  \quad\text{and let}\quad
  C = \F(\PP^1).
\]
Then~$C$ is an irreducible curve and $\#\bigl(C\cap(S\times
S)\bigr)=\infty$. Next consider the finite morphism
\[
  G :=(\psi,\psi) : 
  \PP^1_\Qbar\times\PP^1_\Qbar\longrightarrow\PP^1_\Qbar\times\PP^1_\Qbar,
\]
and let~$V$ be any irreducible component of~$G^{-1}(C)$, so in
particular, $G(V)=C$. Then our assumption implies that the set
\[
  V\cap(\bfmu\times\bfmu) = (G|_V)^{-1}\bigl(C\cap(S\times S)\bigr)
\]
is infinite.

A famous result of Ihara, Serre, and Tate \cite[Chapter~8,
  Theorem~6.1]{lang:diophantinegeometry} says that a curve
in~$\GG_m^2(\Qbar)$ contains infinitely many torsion points if and
only if the curve contains a torsion-point-translate of a subtorus
of~$\GG_m^2$.  Hence there exists a pair of integers~$(m,n)$ satisfying
$\gcd(m,n)=1$ and a root of unity~$\z$ such that
\[
  V = \bigl\{ [x,y,1] : x^my^n = \z\bigr\}.
\]
It follows that
\[
  V\cap (\bfmu\times\bfmu) = V\cap (\bfmu\times\PP^1) = V\cap (\PP^1\times\bfmu),
\]
from which we conclude that
\begin{align*}
  C\cap(S\times S)
  &=G\bigl(V\cap G^{-1}(S\times S)\bigr)\\
  &=G\bigl(V\cap(\bfmu\times\bfmu)\bigr)
  =G\bigl(V\cap(\bfmu\times\PP^1)\bigr)
  = C\cap (S\times\PP^1).
\end{align*}
A similar calculation gives $C\cap(S\times S)=C\cap (\PP^1\times S)$.
Hence
\[
\f^{-1}_1(S)
= \F^{-1}\bigl(C\cap(S\times\PP^1)\bigr)
= \F^{-1}\bigl(C\cap(\PP^1\times S)\bigr)
= \f^{-1}_2(S),
\]
which concludes the proof of Theorem~\ref{theorem:unlikelyfabc}.
\end{proof}

\section{Families that Give Negative Answers to Questions \ref{question:2} and \ref{question:3}}
\label{section:neganswers}
In this section we study a family of maps, shown to us by Junyi Xie,
that yield negative answers to Questions~\ref{question:2}
and~\ref{question:3}. For $a\ne0$ and~$b$, we define a family of
rational maps
\[
  g_T = g_{a,b,T} : \PP^2\dashrightarrow\PP^2
\]
by the formula
\begin{equation}
  \label{eqn:gtaxbz}
  g_T\bigl([X,Y,Z]\bigr)
  = \bigl[ (aX+bZ)(X-TZ)+(X-Z)Y,(X-Z)Y,(X-TZ)Z \bigr].
\end{equation}
For generic~$T$, it is easy to check that~$g_T$ is a birational map with
indeterminacy loci
\[
  I(g_T) = \bigl\{ [0,1,0], [T,0,1] \bigr\},\quad
  I(g_T^{-1}) = \bigl\{ [1,1,0], [a+b,0,1] \bigr\}.
\]
Thus for example, we find that
\begin{align*}
g_T^{-1}(1,1,0) &= \{X=TZ\} \setminus [0,1,0],\\
g_T^{-1}(a+b,0,1) &= \{X=Z\} \setminus [0,1,0].
\end{align*}
The lines
\[
  L := \{Y=0\} \quad\text{and}\quad H :=\{Z=0\}
\]
are  $g_T$-invariant, and the action of~$g_T$ on these
lines is given by
\begin{align*}
  g_T|_L(X,0,Z) &= [aX+bZ,0,Z],&&\text{with $g_T$ undefined at $[T,0,1]$,} \\
  g_T|_H(X,Y,0) &= [aX+Y,Y,0],&&\text{with $g_T$ undefined at $[0,1,0]$.}
\end{align*}
From this information it is easy to see that~$g_T$ is generically algebraically
stable, i.e., $\d(g_T)=2$, and that the exceptional set of~$g_T$ is
\begin{align*}
  \Ecal(g_T)
  :&= \bigl\{ t\in\Qbar : \d(g_t) < \d(g_T) \bigr\} \\
  &= \bigl\{ t\in\Qbar : g_t^n(1,0,1)=[t,0,1]~\text{for some $n\ge0$} \bigr\}.
\end{align*}
(Note that $t=1$ yields $\deg(g_1)=1$, so $\d(g_1)<\d(g_T)$.)
We compute
\[
  g_t^n(1,0,1) = \bigl[a^n+b(a^{n-1}+a^{n-2}+\cdots+a+1),0,1\bigr].
\]
Hence
\[
  \Ecal(g_{a,b,T}) = \bigl\{ a^n+b(a^{n-1}+a^{n-2}+\cdots+a+1) : n \ge 0 \bigr\}.
\]

In particular, taking $a=1$ and $b\ne0$ gives
\[
  \Ecal(g_{1,b,T}) = \{1,1+b,1+2b,1+3b,\ldots\}.
\]
This proves the following result.

\begin{proposition}
Let $g_{a,b,T}$ be the family of maps~\eqref{eqn:gtaxbz}
\begin{parts}
\Part{(a)}
The family $g_{1,1,T}$ has exceptional set
$\Ecal(g_{1,1,T})=\{1,2,3,4,\cdots\}$, which is clearly a set of
unbounded height. Hence~$g_{1,1,T}$ provides a negative answer to
Question~$\ref{question:2}$.
\Part{(b)}
The families $g_{1,1,T}$ and $g_{1,2,T}$ satisfy
\begin{align*}
  \Ecal(g_{1,1,T}) \cap \Ecal(g_{1,2,T}) &= \Ecal(g_{1,2,T}) = \{1,3,5,7,\ldots\}, \\
  \Ecal(g_{1,1,T}) \trianglebin \Ecal(g_{1,2,T}) &= \Ecal(g_{1,1,T})\setminus\Ecal(g_{1,2,T}) = \{2,4,6,\ldots\}, 
\end{align*}
so they provide a negative answer to Question~$\ref{question:3}$.
\end{parts}
\end{proposition}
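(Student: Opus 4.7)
The plan is to read both parts off as immediate specializations of the closed form
\[
  \Ecal(g_{a,b,T}) = \bigl\{ a^n+b(a^{n-1}+a^{n-2}+\cdots+a+1) : n \ge 0 \bigr\}
\]
that has already been derived in the paragraphs preceding the proposition. All the substantive work---identifying $I(g_T)$ and $I(g_T^{-1})$, establishing invariance of the lines $L=\{Y=0\}$ and $H=\{Z=0\}$, verifying generic algebraic stability $\d(g_T)=2$, and computing the orbit of $(1,0,1)$ under the restriction $g_t|_L\colon[X,Z]\mapsto[aX+bZ,Z]$---has been completed, so only arithmetic substitutions remain.

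For part (a), I would set $a=b=1$. The geometric sum $a^{n-1}+\cdots+a+1$ collapses to $n$ and $a^n=1$, so the $n$-th exceptional value is $n+1$; letting $n$ range over $\ZZ_{\ge 0}$ gives $\Ecal(g_{1,1,T})=\{1,2,3,4,\dots\}$. This is the set of positive integers, manifestly of unbounded Weil height on $\PP^1(\Qbar)$, providing the desired negative answer to Question~\ref{question:2}.

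For part (b), substituting $a=1,b=2$ yields the $n$-th value $1+2n$, so $\Ecal(g_{1,2,T})=\{1,3,5,7,\dots\}$. Comparison with part (a) is then elementary set theory: the odd positive integers sit inside the positive integers, whence $\Ecal(g_{1,1,T})\cap\Ecal(g_{1,2,T})=\Ecal(g_{1,2,T})$ is infinite while $\Ecal(g_{1,1,T})\trianglebin\Ecal(g_{1,2,T})=\{2,4,6,\dots\}$ is also infinite, violating the implication in Question~\ref{question:3}.

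The only subtlety worth flagging is confirming that the closed-form orbit formula for $g_t^n(1,0,1)$ remains valid at the specific parameter values $(a,b)=(1,1)$ and $(1,2)$: one must check that the iterates $(1,0,1),g_t(1,0,1),\dots,g_t^{n-1}(1,0,1)$ avoid $I(g_t)=\{[0,1,0],[t,0,1]\}$ until the $n$-th step lands on $[t,0,1]$. Since the entire orbit lies on $L=\{Y=0\}$ it never meets $[0,1,0]$, and the first encounter with $[t,0,1]$ is by definition the event placing $t$ in $\Ecal(g_{a,b,T})$. So no indeterminacy obstruction arises, and the proposition reduces to the two one-line substitutions outlined above.
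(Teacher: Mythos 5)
Your proposal matches the paper exactly: the proposition is stated in the paper immediately after the derivation of $\Ecal(g_{a,b,T}) = \{ a^n+b(a^{n-1}+\cdots+a+1) : n \ge 0 \}$ and its specialization $\Ecal(g_{1,b,T}) = \{1,1+b,1+2b,\ldots\}$, so both parts follow by the same substitutions $(a,b)=(1,1)$ and $(a,b)=(1,2)$ you carry out. Your closing remark confirming that the orbit of $(1,0,1)$ stays on $L$ and avoids indeterminacy until the prescribed step is a sensible sanity check, consistent with (though left implicit in) the paper.
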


We note that one can use these families to construct negative answers
to Question~\ref{question:3} with sparser sets. For example,
$\Ecal(g_{2,0,T})=\{1,2,4,8,\ldots\}$ is an infinite, but
exponentially sparse, subset of $\Ecal(g_{1,1,1})$.

\section{Conjecture~\ref{conjecture:4} Implies Conjecture~\ref{conjecture:1}}
\label{section:conj4implyconj1}

In this section we sketch the proof, essentially due to
Xie~\cite[Theorem~4.3]{arxiv1106.1825}, that
Conjecture~\ref{conjecture:4} implies Conjecture~\ref{conjecture:1}.
More generally, we show that Conjecture~\ref{conjecture:4} implies a
generalizataion of Conjecture~\ref{conjecture:1} to families of
arbitrary dimension. 

\begin{theorem}
\label{theorem:thm4implythm1}
Assume that Conjecture~$\ref{conjecture:4}$ is true for a given $N\ge1$.
Let 
\[
  f_T:\PP^N_T\dashrightarrow\PP^N_T
\]
be a family of dominant rational maps over a smooth irreducible base
variety~$T$, all defined over an algebraically closed field~$K$.  Then
for all $\e>0$, the set
\[
  \bigl\{ t\in T(K) : \d(f_t) \le \d(f_T)-\e \bigr\}
\]
is contained in a proper Zariski closed subset of~$T$.
\end{theorem}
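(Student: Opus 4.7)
The plan is to apply Conjecture~\ref{conjecture:4} to the iterate $f_t^n$ for a suitably large $n$, exploiting the fact that the degrees of the iterates of $f_t$ agree with the generic values $d_m := \deg(f_T^m)$ on a Zariski open subset of $T$. Write $\d := \d(f_T)$.

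Given $\e > 0$, I would first choose a small $\eta > 0$ with $\d e^{-2\eta} > \d - \e$, and then an integer $n$ large enough so that both $d_n^{1/n} < \d\, e^{\eta/N}$ (possible because $d_n^{1/n} \to \d$) and $\g_N^{1/n} > e^{-\eta}$. Next, for each $1 \le m \le nN$ the locus $\{t \in T : \deg(f_t^m) < d_m\}$ is a proper Zariski closed subset of $T$; let $Z \subset T$ be the union of these finitely many loci, together with the closed locus where $f_t$ fails to be dominant. Then $Z$ is a proper Zariski closed subset and $\deg(f_t^m) = d_m$ for every $t \in T \setminus Z$ and every $m \le nN$.

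For any such $t$, applying Conjecture~\ref{conjecture:4} to the dominant rational self-map $f_t^n$ gives
\[
  \d(f_t)^n = \d(f_t^n) \ge \g_N \min_{0 \le k < N} \frac{\deg(f_t^{n(k+1)})}{\deg(f_t^{nk})} = \g_N \min_{0 \le k < N} \frac{d_{n(k+1)}}{d_{nk}}.
\]
Submultiplicativity $d_{a+b} \le d_a d_b$ gives $d_{kn} \le d_n^k \le \d^{kn} e^{kn\eta/N}$, while Fekete's lemma yields $d_{(k+1)n} \ge \d^{(k+1)n}$. Together these imply that for each $0 \le k < N$,
\[
  \frac{d_{n(k+1)}}{d_{nk}} \ge \d^n e^{-kn\eta/N} \ge \d^n e^{-n\eta}.
\]
Hence $\d(f_t) \ge \g_N^{1/n} \d\, e^{-\eta} \ge \d\, e^{-2\eta} > \d - \e$, so the exceptional set is contained in $Z$.

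The main obstacle is to justify that each degree-drop locus $\{t : \deg(f_t^m) < d_m\}$ is Zariski closed. For a single rational map this is standard: the specialized defining polynomials acquire a nontrivial common factor precisely on the vanishing locus of certain resultants. For iterates one must also confirm that iterating the specialization agrees with specializing the iterate on the good locus, since composition and specialization of rational maps need not commute at special $t$. I would handle this by induction on $m$: on the Zariski open set where $\deg(f_t^k) = d_k$ for every $k < m$, the iterate $(f_t)^m$ coincides with $(f_T^m)_t$ as a rational map, and any further drop in degree is again cut out by polynomial vanishing conditions on~$t$.
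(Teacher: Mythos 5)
Your proof is correct and takes essentially the same approach as the paper: apply Conjecture~\ref{conjecture:4} to a high iterate $f_t^n$, restrict to the Zariski open locus on which $\deg(f_t^{km})$ agrees with its generic value for $0\le k\le N$, and show the generic degree ratios $d_{n(k+1)}/d_{nk}$ are close to $\d(f_T)^n$. The only cosmetic difference is in the final estimate, where you invoke Fekete's lemma and submultiplicativity to bound the ratios explicitly, while the paper simply passes to the limit $\lim_n(\g_N\,d_{(k+1)n}/d_{kn})^{1/n}=\d(f_T)$ for each fixed $k$; both arguments yield the same choice of iterate and the same conclusion.
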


\begin{remark}
We thank Junyi Xie for pointing out that the proof
of~\cite[Theorem~4.3]{arxiv1106.1825} shows that
Conjecture~\ref{conjecture:4} implies that the map $t\to\d(f_t)$ is
lower semi-continuous, which strengthens the conclusion of
Theorem~\ref{theorem:thm4implythm1}.
\end{remark}

\begin{proof}[Proof of Theorem~$\ref{theorem:thm4implythm1}$]
We first view~$f=f_T$ as a rational map over the function
field~$\overline{K(T)}$. Then using the fact that~$\g_N>0$
and the definition of dynamical degree, we find that for any $k\ge1$ we have
\begin{align*}
  \lim_{n\to\infty} \left(\g_N \frac{\deg(f^{(k+1)n})}{\deg(f^{kn})}\right)^{1/n}
  &= \lim_{n\to\infty} \g_N^{1/n}  
     \frac{\bigl(\deg(f^{(k+1)n})^{1/(k+1)n}\bigr)^{k+1}}{\bigl(\deg(f^{kn})^{1/kn}\bigr)^k} \\
  &= \frac{\d(f)^{k+1}}{\d(f)^k} = \d(f).
\end{align*}
In particular, we can find an $m=m(\e,N)$ such that for all $0\le k<N$ we have
\begin{equation}
  \label{eqn:gdegf2mfm1mgedfe}
  \left(\g_N\cdot \frac{\deg(f^{(k+1)m})}{\deg(f^{km})}\right)^{1/m} \ge \d(f)- \e.
\end{equation}

We next observe that for any family $g:\PP^N_T\dashrightarrow\PP^N_T$
of dominant rational maps, the set
\[
  U(g):=\bigl\{t\in T(K) : \deg(g_t)=\deg(g)\bigr\}
\]
is a non-empty Zariski open subset of~$T$. We set
\[
  U_\e := \bigcap_{k=0}^N U(f^{km}) \subset T(K),
\]
where $m=m(\e,N)$ is as in~\eqref{eqn:gdegf2mfm1mgedfe}.

Finally, for $t\in U_\e$ we compute
\begin{align*}
  \d(f_t)
  &= \d(f_t^m)^{1/m} \quad
    \text{follows easily from definition of $\d$,} \\*
  &\ge \biggl(\g_N \cdot \min_{0\le k<N} \frac{\deg(f_t^{(k+1)m})}{\deg(f_t^{km})}\biggr)^{1/m}
    \text{Conjecture~\ref{conjecture:4} applied to $f_t^m$,} \\
  &= \biggl(\g_N \cdot \min_{0\le k<N} \frac{\deg(f^{(k+1)m})}{\deg(f^{km})}\biggr)^{1/m}
    \text{since $\displaystyle t\in U_\e := \bigcap_{0\le k<N}  U(f^{km})$,}\\*
  &\ge \d(f)-\e
    \quad\text{from \eqref{eqn:gdegf2mfm1mgedfe}.}
\end{align*}
This completes the proof of Theorem~\ref{theorem:thm4implythm1}.
\end{proof}

\section{A Dynamical Degree Estimate for Monomial Maps}
\label{section:monomialmaps}
As noted in the introduction, Xie~\cite{arxiv1106.1825} has shown that
there is a constant $\g>0$ such that
\[
  \d(f) \ge \g\cdot \frac{\deg(f^2)}{\deg(f)}
  \quad\text{for all birational maps $f:\PP^2\dashrightarrow\PP^2$.}
\]
In this section we prove an analogous result for dominant monomial
maps. We recall that a \emph{monomial map} is an endomorphism of the
torus~$\GG_m^N$, i.e., a map
\[
  \f_A : \GG_m^N\longrightarrow\GG_m^N
\]
of the form
\[
  \f_A(X_1,\ldots,X_N)=
   \left(
    X_1^{a_{11}}X_2^{a_{12}}\cdots X_N^{a_{1N}},\ldots,
    X_1^{a_{N1}}X_2^{a_{N2}}\cdots X_N^{a_{NN}} \right),
\]
where~$A=(a_{ij})\in\Mat_N(\ZZ)$ is an $N$-by-$N$ matrix with integer
coefficients.  The associated rational
map~$\f_A:\PP^N\dashrightarrow\PP^N$ is dominant if and only if
$\det(A)\ne0$.

\begin{corollary}
\label{corollary:dfAgeminfAk}
Let~$A\in\Mat_N(\ZZ)$ be a matrix with $\det(A)\ne0$, and let
$\f_A:\PP^N\dashrightarrow\PP^N$ be the associated monomial map. Then
\[
  \d(\f_A) \ge \frac{2^{1/N}-1}{2N^2}
  \min_{0\le k\le N-1} \frac{ \deg(\f_A^{k+1}) }{ \deg(\f_A^k) }.
\]
\end{corollary}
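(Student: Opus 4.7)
The plan is to reduce the claim to a matrix-theoretic inequality using three ingredients: the identification $\d(\f_A)=\rho(A)$ of the dynamical degree with the spectral radius, a two-sided comparison between $\deg(\f_A)$ and the operator norm $\|A\|_{\mathrm{op}}$, and a Cayley-Hamilton bound on consecutive norm ratios. The composition rule $\f_A\circ\f_B=\f_{AB}$ gives $\f_A^k=\f_{A^k}$ and hence $\deg(\f_A^k)=\deg(\f_{A^k})$, while $\d(\f_A)=\rho(A)$ is classical (Hasselblatt--Propp, Favre--Wulcan); together these let us rephrase the corollary in purely matrix terms.

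Starting from the explicit toric formula
\[
  \deg(\f_A) \;=\; \sum_{j=1}^N \max_i \max(-a_{ij},0) \;+\; \max\!\Bigl(0,\,\max_i\sum_j a_{ij}\Bigr),
\]
I would verify the sandwich $\tfrac{1}{N}\|A\|_{\mathrm{op}} \le \|A\|_\infty \le \deg(\f_A) \le 2N\|A\|_\infty \le 2N\|A\|_{\mathrm{op}}$, where $\|A\|_\infty:=\max_{i,j}|a_{ij}|$. The upper bound is immediate from the formula; the inequality $\|A\|_\infty \le \deg(\f_A)$ follows by picking $(i_0,j_0)$ with $|a_{i_0j_0}|=\|A\|_\infty$ and splitting on the sign of $\sum_j a_{i_0 j}$. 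Applying this to $A^k$ and $A^{k+1}$ yields $\deg(\f_{A^{k+1}})/\deg(\f_{A^k}) \le 2N^2\,\|A^{k+1}\|_{\mathrm{op}}/\|A^k\|_{\mathrm{op}}$, producing the factor $2N^2$ in the final constant.

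The heart of the argument is the inequality: writing $r=\rho(A)$ and $\mu=\min_{0\le k<N}\|A^{k+1}\|_{\mathrm{op}}/\|A^k\|_{\mathrm{op}}$, one has $\mu \le r/(2^{1/N}-1)$. By Cayley-Hamilton,
\[
   A^N \;=\; \sum_{k=1}^N (-1)^{k+1}\,e_k(A)\,A^{N-k},
\]
where $e_k(A)$ is the $k$-th elementary symmetric polynomial in the eigenvalues, so $|e_k(A)|\le\binom{N}{k}r^k$. Iterating $\|A^{j+1}\|_{\mathrm{op}}\ge\mu\|A^j\|_{\mathrm{op}}$ gives $\|A^{N-k}\|_{\mathrm{op}} \le \mu^{-k}\|A^N\|_{\mathrm{op}}$; submultiplicativity, substitution, and division by $\|A^N\|_{\mathrm{op}}>0$ (nonzero since $\det A\ne 0$) produce
\[
  1 \;\le\; \sum_{k=1}^N \binom{N}{k}(r/\mu)^k \;=\; (1+r/\mu)^N-1,
\]
so $(1+r/\mu)^N \ge 2$, i.e.\ $\mu \le r/(2^{1/N}-1)$. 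Combining with the second step gives $\min_k \deg(\f_{A^{k+1}})/\deg(\f_{A^k}) \le 2N^2\,r/(2^{1/N}-1) = 2N^2\,\d(\f_A)/(2^{1/N}-1)$, which rearranges to the corollary.

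The main obstacle I expect is the Cayley-Hamilton step: one must recognize that this is the correct lever (it is precisely the reason the minimum need only be taken over the first $N$ ratios, matching the degree $N$ of the characteristic polynomial) and carry out the rearrangement turning a binomial sum into the threshold $2^{1/N}-1$. The degree-norm comparison and the passage to spectral radius are relatively routine once the explicit formula for $\deg(\f_A)$ is in hand.
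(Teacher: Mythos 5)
Your proposal is correct and follows essentially the same route as the paper: identify $\d(\f_A)$ with the spectral radius and $\deg(\f_A)$ with $D(A)$ via Hasselblatt--Propp, sandwich $\deg$ between constant multiples of a matrix norm, and use Cayley--Hamilton together with the bound $|e_k|\le\binom{N}{k}\rho(A)^k$ to force one ratio $\|A^{k+1}\|/\|A^k\|$ with $k<N$ below $\rho(A)/(2^{1/N}-1)$. The only cosmetic differences are that the paper works directly with the sup-norm $\|A\|_\infty$ (your detour through $\|A\|_{\mathrm{op}}$ is unnecessary, and ``submultiplicativity'' plays no actual role in the Cayley--Hamilton estimate) and that the paper phrases the key lemma as a contradiction rather than a direct inequality.
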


Before starting the proof, we set some notation and quote a result due
to Hasselblatt and Propp.  For any set of real numbers~$S$, we let
$\maxplus(S)=\max(0,S)$, and for matrices~$A\in\Mat_N(\RR)$ with real
coefficients, we define
\[
  D(A) = \sum_{j=1}^N \maxplus\{-a_{ij} : 1\le i\le N\}
            + \maxplus\left\{\sum_{j=1}^N a_{ij} : 1\le i\le N\right\},
\]
and we write
\begin{align*}
  \|A\| &= \max\bigl\{ |a_{ij}| : 1\le i,j\le N \bigr\}, \\*
  \l(A) &= \max\bigl\{ |\a| : \text{$\a\in\CC$ is an eigenvalue of $A$} \bigr\},
\end{align*}
for the sup-norm and the spectral radius of the matrix~$A$.

\begin{proposition}[Hasselblatt--Propp]
\label{proposition:HPdyndeg}
Let $A\in\Mat_N(\ZZ)$ be a matrix satisfying $\det(A)\ne0$.
\begin{parts}
\Part{(a)}
The degree of $\f_A$ is given by~$\deg\f_A=D(A)$.  
\Part{(b)}
The dynamical degree of~$\f_A$ is given by $\d(\f_A)=\l(A)$.
\end{parts}
\end{proposition}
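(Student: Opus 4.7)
The plan is to establish (a) by an explicit homogenization of $\f_A$ on $\PP^N$ and then derive (b) from (a) by showing that the functional $D(\cdot)$ is comparable to the sup-norm on $\Mat_N(\RR)$, after which the Gelfand spectral-radius formula delivers the conclusion.

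For (a), I would pass to homogeneous coordinates $[X_0:X_1:\cdots:X_N]$ with affine coordinates $x_j = X_j/X_0$. The $i$-th affine component of $\f_A$ becomes
\[
  \prod_{j=1}^N (X_j/X_0)^{a_{ij}} = X_0^{-\sum_j a_{ij}} \prod_{j=1}^N X_j^{a_{ij}},
\]
and the zeroth affine component is $1$. Multiplying all $N+1$ components by the minimal monomial $M = X_0^{e_0}X_1^{e_1}\cdots X_N^{e_N}$ that clears every denominator forces $e_0 = \maxplus\{\sum_j a_{ij}\}$ and $e_j = \maxplus\{-a_{ij}\}$ for $j\ge 1$, and all $N+1$ resulting coordinates are monomials of the common total degree $e_0+\sum_{j\ge1}e_j = D(A)$. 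To conclude $\deg\f_A = D(A)$ rather than some smaller value, I must check that no single variable $X_k$ divides every coordinate: the minimality of $e_k$ guarantees that whenever $e_k\ge 1$, there is an index $i^*$ at which the relevant maximum is attained, making the $X_k$-exponent of the $i^*$-th coordinate (after clearing) exactly zero.

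For (b), the key observation is that monomial maps compose by matrix multiplication, $\f_A\circ\f_B = \f_{AB}$, on the dense open set $\GG_m^N\subset\PP^N$, and hence agree as rational maps on $\PP^N$. Thus $\f_A^n = \f_{A^n}$, and by (a) we have $\deg(\f_A^n) = D(A^n)$, giving
\[
  \d(\f_A) = \lim_{n\to\infty} D(A^n)^{1/n}.
\]
Since $\lim_n\|A^n\|^{1/n}=\l(A)$ by Gelfand's formula applied to the sup-norm, it suffices to prove a comparability of the form
\[
  c_N\|B\| \le D(B) \le 2N\|B\| \quad\text{for every } B\in\Mat_N(\RR),
\]
with positive constants depending only on $N$. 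The upper bound is immediate from the definition. For the lower bound I would select an entry $b_{i_0j_0}$ with $|b_{i_0j_0}|=\|B\|$ and split on its sign: if $b_{i_0j_0}<0$ the term $\maxplus\{-b_{ij_0}\}$ already exceeds $\|B\|$; if $b_{i_0j_0}>0$ then either the $i_0$-th row sum is at least $\|B\|/2$ (contributing to the last summand of $D$) or the remaining entries of that row have sum below $-\|B\|/2$, forcing some column $j_1$ to contribute at least $\|B\|/(2(N-1))$ to the first sum of $D$.

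I expect the main obstacle to be the lower comparison $D(B)\gtrsim\|B\|$. Because $D$ treats row sums and column minima asymmetrically, it is not a genuine matrix norm, and extracting a uniform constant $c_N>0$ requires tracking positive versus negative extremal entries separately and showing that a large positive entry cannot be ``cancelled'' within its row without leaving a detectable negative residue in some column of $B$. Once this comparability lemma is in hand, combining it with (a) and Gelfand's formula immediately yields $\d(\f_A)=\l(A)$.
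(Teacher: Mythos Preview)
The paper does not actually prove this proposition; it simply cites the Hasselblatt--Propp paper for both parts. Your proposal is a correct self-contained proof, so in that sense you are supplying more than the paper does.

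Your argument for~(a) via explicit homogenization is standard and sound; the check that no $X_k$ divides every coordinate is exactly the point that forces the degree to equal $D(A)$ rather than something smaller, and since all coordinates are monomials, ruling out a common monomial factor suffices.

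For~(b), your strategy is also correct, and the ``main obstacle'' you anticipate --- the lower comparison $D(B)\gtrsim\|B\|$ --- is in fact stated and proved in the paper as a separate lemma (Lemma~\ref{lemma:DAeqinfnorm}), used for a different purpose. The paper's argument for that direction is a bit cleaner than your sign-splitting case analysis and yields the constant $1/N$: for any $i,j$ one writes
\[
  b_{ij} = \sum_{k} b_{ik} - \sum_{k\ne j} b_{ik},
\]
bounds the first sum by $D(B)$ (it is at most the row-sum term) and each $-b_{ik}$ by $D(B)$ (each is at most the corresponding column term), giving $b_{ij}\le N\,D(B)$; combined with $-b_{ij}\le D(B)$ this yields $\|B\|\le N\,D(B)$ directly. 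Either route, once plugged into Gelfand's formula together with~(a) and $\f_A^n=\f_{A^n}$, gives $\d(\f_A)=\l(A)$.
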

\begin{proof}
For~(a), see~\cite[Proposition~2.14]{MR2358970}, and for~(b),
see~\cite[Theorem~6.2]{MR2358970}.
\end{proof}  

We now verify that~$D(A)$ induces a distance function on~$\Mat_N(\RR)$
that is equivalent to the easier-to-deal-with sup norm.

\begin{lemma}
\label{lemma:DAeqinfnorm}
Let $A\in\Mat_N(\RR)$. Then
\[
  \frac{1}{2N}D(A) \le \|A\| \le N D(A).
\]
\end{lemma}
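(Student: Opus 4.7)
The plan is to prove the two inequalities separately by elementary term-by-term bounding, exploiting the fact that $D(A)$ is built out of $\maxplus$ operations on column entries and row sums that interact cleanly with the entrywise bound $|a_{ij}| \le \|A\|$.

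For the lower bound $D(A) \le 2N\|A\|$, I would bound the two parts of $D(A)$ separately against $\|A\|$. In the first summand, each of the $N$ column contributions $\maxplus\{-a_{ij}:1\le i\le N\}$ is at most $\|A\|$ because $-a_{ij}\le|a_{ij}|\le\|A\|$, so the first summand is bounded by $N\|A\|$. The second summand is the $\maxplus$ of the $N$ row sums, each a sum of at most $N$ entries of absolute value $\le\|A\|$, so it too is bounded by $N\|A\|$. Adding gives the claim.

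For the upper bound $\|A\| \le N D(A)$, I would in fact prove the sharper inequality $\|A\| \le D(A)$, from which the stated form is immediate. Fix a matrix entry $a_{ij}$ and split into cases by the sign of $a_{ij}$. If $a_{ij}\le 0$, then $|a_{ij}|=-a_{ij}$ is dominated by the $j$-th column term $\maxplus\{-a_{i'j}:1\le i'\le N\}$, which is a single summand of the first piece of $D(A)$, hence $\le D(A)$. If $a_{ij}>0$, I would rewrite
\[
  a_{ij}=\sum_{k=1}^N a_{ik}-\sum_{k\ne j}a_{ik},
\]
bound the first term by $\maxplus\{\sum_k a_{i'k}:1\le i'\le N\}$ (the second piece of $D(A)$), and bound the second term by $\sum_{k\ne j}\maxplus\{-a_{i'k}:1\le i'\le N\}$ (a sub-sum of the first piece of $D(A)$). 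Adding shows $a_{ij}\le D(A)$.

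There is no real obstacle: both inequalities are essentially immediate once one unpacks the definitions and splits on sign. The only minor subtlety is in the positive-entry case, where the row sum $\sum_k a_{ik}$ may itself be negative — but then the corresponding $\maxplus$ replaces it with zero, which only strengthens the bound, so the argument goes through uniformly.
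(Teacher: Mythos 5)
Your proposal is correct, and in fact proves something strictly stronger than the lemma as stated. For the direction $D(A)\le 2N\|A\|$, you and the paper do the same thing: bound each of the $N+1$ $\maxplus$ terms in the definition of $D(A)$ by $N\|A\|$ (or by $\|A\|$ for the column pieces) via the triangle inequality. For the direction $\|A\|\le ND(A)$, both you and the paper start from the same algebraic identity $a_{ij}=\sum_k a_{ik}-\sum_{k\ne j}a_{ik}$, but you carry out the bounding more carefully: where the paper estimates each $-a_{ik}$ for $k\ne j$ crudely by the full quantity $D(A)$, accumulating a factor of $N$, you instead bound $-a_{ik}$ by the single column term $\maxplus\{-a_{i'k}:i'\}$ and then observe that the sum of these over $k\ne j$, together with the row-sum $\maxplus$, is still dominated by $D(A)$ because all the $\maxplus$ pieces are nonnegative. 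This yields the sharper inequality $\|A\|\le D(A)$, of which the stated $\|A\|\le ND(A)$ is an immediate consequence. (The $N=1$ case, where $D(A)=|a_{11}|=\|A\|$, shows your bound is sharp.) Feeding this improvement into Corollary \ref{corollary:dfAgeminfAk} would in fact improve the constant there from $(2^{1/N}-1)/2N^2$ to $(2^{1/N}-1)/2N$.
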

\begin{proof}
From the definition of $D(A)$ we see that for every~$i,j$ we have
\[
  -a_{ij} \le D(A)  \quad\text{and}\quad \sum_{k=1}^N a_{ik} \le D(A).
\]
Hence every $a_{ij}\ge -D(A)$, while for every $i,j$ we can estimate
\[
  a_{ij} = \sum_{k=1}^N a_{ik} - \sum_{\substack{k=1\\k\ne j\\}} a_{ik}
  \le D(A) + \sum_{\substack{k=1\\k\ne j\\}} D(A) = ND(A).
\]
This proves that $-D(A)\le a_{ij}\le ND(A)$, so in particular
$|a_{ij}|\le ND(A)$, so $\|A\|\le ND(A)$. And for the other
direction, the triangle inequality applied to each term in the
definition of $D(A)$ immediately gives $D(A)\le 2N\|A\|$.
\end{proof}

\begin{proposition}
\label{proposition:Ak1lelAAk2N}
Let $A\in\Mat_N(\RR)$ be a matrix. Then
\[
  \|A^{k+1}\| \le  \frac{\l(A)\cdot \|A^{k}\|}{2^{1/N}-1}
  \quad\text{for some $0\le k\le N-1$.}
\]
\end{proposition}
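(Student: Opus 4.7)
The plan is to argue by contradiction using the Cayley--Hamilton theorem together with the elementary estimate $|e_i(\lambda_1,\ldots,\lambda_N)| \le \binom{N}{i}\lambda(A)^i$ for the $i$-th symmetric function of the eigenvalues.

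Set $c := 2^{1/N}-1$. If $A$ is nilpotent, then $A^N = 0$ and the case $k=N-1$ of the claimed inequality reduces to $0 \le 0$; so we may assume $\lambda(A) > 0$ and $\|A^N\| > 0$. Suppose toward a contradiction that
\[
  \|A^{k+1}\| > \lambda(A)\|A^k\|/c \quad\text{for every } 0 \le k \le N-1.
\]
Rearranging to $\|A^k\| < (c/\lambda(A))\|A^{k+1}\|$ and iterating downward from $N$ yields the key bound
\[
  \lambda(A)^{N-j}\|A^j\| < c^{N-j}\|A^N\|
  \qquad(0 \le j \le N-1).
\]

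Next I invoke Cayley--Hamilton. Writing the characteristic polynomial of $A$ as $\prod_i(t-\lambda_i) = \sum_{i=0}^N (-1)^i e_i t^{N-i}$ gives the expansion $A^N = \sum_{j=0}^{N-1}(-1)^{N-j-1}e_{N-j}A^j$. Applying the triangle inequality for the sup-norm---crucially, submultiplicativity is \emph{not} required---together with the bound on $|e_i|$ produces
\[
  \|A^N\| \le \sum_{j=0}^{N-1}\binom{N}{N-j}\lambda(A)^{N-j}\|A^j\|.
\]
Substituting the iterated inequality from the previous paragraph collapses the right-hand side to a constant times $\|A^N\|$:
\[
  \|A^N\| < \sum_{j=0}^{N-1}\binom{N}{N-j} c^{N-j}\|A^N\|
         = \bigl((1+c)^N - 1\bigr)\|A^N\|.
\]
Dividing through forces $(1+c)^N > 2$, i.e.\ $c > 2^{1/N}-1$, contradicting the choice of $c$.

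The whole argument hinges on the calibration of $c$: the identity $\sum_{i=1}^N \binom{N}{i}c^i = (1+c)^N - 1$ equals exactly $1$ when $c = 2^{1/N}-1$, which is what makes the strict inequality $\|A^N\| < \|A^N\|$ fail. I do not anticipate a serious obstacle: the sup-norm is well behaved under sums so the triangle inequality is enough, and one never has to estimate $\|AB\|$ by $\|A\|\cdot\|B\|$. The only minor bookkeeping is the nilpotent case, which is handled trivially at the start.
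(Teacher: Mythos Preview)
Your proof is correct and follows essentially the same route as the paper: assume the inequality fails for every $k$, iterate to bound $\lambda(A)^{N-j}\|A^j\|$ by $c^{N-j}\|A^N\|$, apply Cayley--Hamilton together with $|e_i|\le\binom{N}{i}\lambda(A)^i$, and collapse the sum via the binomial theorem to reach $\|A^N\|<\|A^N\|$. The only cosmetic difference is that you treat the nilpotent case separately at the outset, whereas the paper's argument absorbs it implicitly (the assumed strict inequalities already force $\|A^N\|>0$, so the contradiction still fires).
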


\begin{proof}
Write the characteristic polynomial of $A$ as
\[
\det(xI-A) = \prod_{i=1}^N (x-\l_i) = \sum_{j=0}^N (-1)^j\s_j x^{N-j},
\]
where~$\s_j$ is the $j$'th elementary symmetric polynomial
of~$\l_1,\ldots,\l_N$.  We note for future reference that~$\s_j$ is a
sum of~$\binom{N}{j}$ monomials, each monomial being a product of~$j$
of the~$\l_i$'s, which combined with $\l(A)=\max|\l_j|$ gives the
upper bound
\begin{equation}
  \label{eqn:sjleNjAj}
  |\s_j| \le \binom{N}{j}\l(A)^j.
\end{equation}
We are going to use the Cayley-Hamilton theorem, which says that~$A$
satisfies its characteristic polynomial.

For notational convenience, we set $\e:=2^{1/N}-1$. We suppose that
\begin{equation}
  \label{eqn:CkeCk1x}
  \|A^{k+1}\| >  \e^{-1} \l(A)  \|A^k\|\quad\text{for all $0\le k\le N-1$}
\end{equation}
and derive a contradiction.  Iterating~\eqref{eqn:CkeCk1x}, we find
that
\begin{equation}
  \label{eqn:CkeNkCNx}
  \l(A)^{N-k} \|A^k\| < \e^{N-k}\|A^N\|  \quad\text{for all $0\le k\le N-1$.}
\end{equation}
We use this to estimate
\begin{align*}
  \|A^N\|
  &= \left\| \sum_{j=1}^N (-1)^j\s_j A^{N-j} \right\|
  &&\text{Cayley-Hamilton theorem,} \\
  &\le \sum_{j=1}^N |\s_j| \|A^{N-j}\|
  &&\text{triangle inequality,} \\
  &\le \sum_{j=1}^N \binom{N}{j}\l(A)^j\|A^{N-j}\|
  &&\text{from \eqref{eqn:sjleNjAj},} \\
  &< \sum_{j=1}^N \binom{N}{j} \e^j \|A^N\|
  &&\text{from \eqref{eqn:CkeNkCNx} with $k=N-j$,} \\
  &= \bigl( (1+\e)^N - 1 \bigr) \|A^N\|
  &&\text{binomial theorem,} \\
  &= \|A^N\|
  &&\text{since $\e:=2^{1/N}-1$.}
\end{align*}
This strict inequality is a contradiction, so~\eqref{eqn:CkeCk1x} is false,
which completes the proof of Proposition~\ref{proposition:Ak1lelAAk2N}.
\end{proof}

\begin{proof}[Proof of Corollary $\ref{corollary:dfAgeminfAk}$]
We choose some $0\le k\le N-1$ such that
Proposition~\ref{proposition:Ak1lelAAk2N} holds, and then we use
Lemma~\ref{lemma:DAeqinfnorm} and
Proposition~\ref{proposition:HPdyndeg} to estimate
\[
  \frac{1}{2^{1/N}-1}
  \ge \frac{ \|A^{k+1}\| }{\l(A)\cdot \|A^{k}\|}
  \ge \frac{ (2N)^{-1} D(A^{k+1}) } { \l(A) N D(A^k) }
  = \frac{ \deg(\f_A^{k+1}) } { 2N^2 \d(\f_A) \deg(\f_A^k) }.
\]
This completes the proof of Corollary~\ref{corollary:dfAgeminfAk}.
\end{proof}  

\begin{remark}
We close with an observation.  Let $\f_A$ be a monomial map that is
birational, i.e., whose matrix satisfies $\det(A)=1$.  If we label the
eigenvalues of~$A$ so that $|\l_1|\ge|\l_2|\ge\cdots\ge|\l_N|$, then
the largest eigenvalue of $A^{-1}$ is $\l_N^{-1}$.
Proposition~\ref{proposition:HPdyndeg}(b) gives
\[
  \d(f^{-1}) = |\l_N^{-1}| = |\l_1\l_2\cdots\l_{N-1}| \le |\l_1|^{N-1} = \d(f)^{N-1}.
\]
It turns out that this holds for all birational maps, even at the
degree stage, before taking the dynamical degree limit. We thank
Mattias Jonsson for showing us the following proof.
\end{remark}

\begin{proposition}
\label{proposition:degfinvledegfn-1}
Let $f : P^N \dashrightarrow P^N$ be a birational map. Then
\begin{equation}
  \label{eqn:degf-1degfN-1}
  \deg(f^{-1}) \le  (\deg f)^{N-1}.
\end{equation}
\end{proposition}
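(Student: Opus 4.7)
The plan is to lift $f$ to a common resolution, extract a log-concave sequence of intersection numbers via the Khovanskii--Teissier inequality, and then deduce the bound by an elementary ratio argument.

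First I choose, by resolution of indeterminacy, a smooth projective variety $X$ together with birational morphisms $p,q:X\to\PP^N$ satisfying $f=q\circ p^{-1}$ (and hence $f^{-1}=p\circ q^{-1}$). Writing $H$ for the hyperplane class on $\PP^N$, I set $\a:=p^*H$ and $\b:=q^*H$ (both nef on $X$) and define
\[
  s_k \;:=\; \a^k\cdot\b^{N-k}\in\ZZ,\qquad 0\le k\le N.
\]
Because $p$ and $q$ are birational, $s_0=\b^N=(\deg q)H^N=1$ and symmetrically $s_N=\a^N=1$. The projection formula gives
\[
  \deg(f) \;=\; \bigl(p_*(q^*H)\bigr)\cdot H^{N-1} \;=\; q^*H\cdot p^*H^{N-1} \;=\; \b\cdot\a^{N-1} \;=\; s_{N-1},
\]
and applying the identical argument to $f^{-1}=p\circ q^{-1}$ yields $\deg(f^{-1})=\a\cdot\b^{N-1}=s_1$.

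Next I invoke the Khovanskii--Teissier inequality for the nef classes $\a,\b$ on the smooth projective $N$-fold $X$, which says that $(s_k)_{0\le k\le N}$ is log-concave: $s_k^2\ge s_{k-1}s_{k+1}$. Writing $r_k:=s_k/s_{k-1}$ for $1\le k\le N$, this means $r_1\ge r_2\ge\cdots\ge r_N>0$, and of course $\prod_{k=1}^N r_k = s_N/s_0 = 1$. Since $r_N$ is the smallest of the $r_k$, each of the $N-1$ factors in $r_2 r_3\cdots r_N$ is at least $r_N$, so
\[
  r_N^{N-1}\;\le\; r_2 r_3\cdots r_N \;=\; \frac{1}{r_1},
\]
i.e.\ $r_1 r_N^{N-1}\le 1$. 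But $s_1=r_1$ and $s_{N-1}=r_1 r_2\cdots r_{N-1}=1/r_N$, so this last inequality rearranges precisely to $s_1\le s_{N-1}^{N-1}$, which is the desired bound \eqref{eqn:degf-1degfN-1}.

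The main technical input is the Khovanskii--Teissier inequality, which is classical and can be proved by iterated application of the Hodge index theorem to hyperplane sections of $X$; everything else is the projection formula together with the elementary observation that a non-increasing sequence of positive reals with product $1$ has $r_1 r_N^{N-1}\le 1$. The only mild subtlety is ensuring that the chosen $X$ can simultaneously resolve $f$ and $f^{-1}$, which is standard.
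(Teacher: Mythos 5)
Your proposal is correct and follows essentially the same route as the paper's proof: resolve the indeterminacy of $f$ to obtain two birational morphisms $p,q:X\to\PP^N$ (the paper writes $\pi$ and $g$), form the mixed intersection numbers of the nef and big pullbacks of the hyperplane class, and invoke the Khovanskii--Teissier inequality to get log-concavity. The paper's notation is $d_k:=D_g^k\cdot D_\pi^{N-k}$, which equals your $s_{N-k}$, and it deduces $d_{N-1}\le d_1^{N-1}$ directly from $d_0=1$ and concavity of $k\mapsto\log d_k$; your version via the non-increasing ratios $r_k$ with $\prod r_k=1$ is the same elementary step phrased slightly differently.
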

\begin{proof}
\textup{(Mattias Jonsson, private communication)}
Blow up~$\PP^N$ to get a birational morphism $\pi: X\to\PP^N$ so that
$f$ lifts to a birational morphism $g:X\to\PP^N$;
see~\cite[Example~II.7.17.3]{hartshorne}.
Let $H\in\Div(\PP^N)$ be a hyperplane,
and set $D_g:=g^*H$ and $D_\pi:=\pi^*H$.
For $0\le k\le N$, let~$d_k$ be the intersection index
\[
  d_k := D_g^k \cdot D_\pi^{N-k}.
\]
The divisors $D_g$ and $D_\pi$ are big and nef, so the
Khovanskii--Teissier inequality~\cite[Corollary~1.6.3,
  Example~1.6.4]{MR2095471} tell us that the map $k\to \log d_k$ is
concave.  Since~$d_0=1$, this gives, $d_{N-1}\le d_1^{N-1}$.  Further,
we have $d_1=\deg(f)$, while the fact that~$f$ is birational implies
that $d_{N-1}=\deg(f^{-1})$.  This gives the desired result.
\end{proof}




\def\cprime{$'$}

\end{document}